\documentclass[12pt, twoside, reqno] {amsart}

\usepackage{amsfonts}
\usepackage{amssymb}
\usepackage{latexsym}
\usepackage{epsf,graphicx}
\usepackage{epsf,graphicx,latexsym,%
}

\newtheorem{example}{Example}[section]

\newcommand{\be} {\begin{eqnarray}}
\newcommand{\ee} {\end{eqnarray}}
\newcommand{\bep} {\begin{eqnarray*}}
\newcommand{\eep} {\end{eqnarray*}}

\textwidth 135mm \textheight 200mm




\newcommand {\Hol}{\mathop{\rm Hol}\nolimits}

\renewcommand {\Re}{\mathop{\rm Re}\nolimits}
\newcommand {\Ker}{\mathop{\rm Ker}\nolimits}

\newcommand {\Null}{\mathop{\rm Null}\nolimits}

\newcommand {\DD}{\mathcal{D}}
\newcommand {\Ff}{\mathcal{F}}
\newcommand {\A}{\mathcal{A}}


\newcommand{\R}{{\mathbb R}}

\newcommand{\C}{{\mathbb C}}



\newtheorem{remar}{Remark}[section]
\newtheorem{examp}{Example}[section]
\newtheorem{defin}{Definition}[section]
\newtheorem{corol}{Corollary}[section]
\newtheorem{propo}{Proposition}[section]
\newtheorem{theorem}{Theorem}[section]
\newtheorem{lemma}{Lemma}[section]
\newtheorem{remark}{Remark}[section]

\newcommand{\rema}{\begin{remar}\rm}
\newcommand{\erema}{$\blacktriangleright$\end{remar}}

\newcommand{\exa}{\begin{examp}\rm}
\newcommand{\eexa}{$\blacktriangleright$\end{examp}}

\def\lwvec(#1 #2){\linewd 0.1
           \lvec(#1 #2)
           \linewd 0.05}

\begin{document}

\title[Continuous and holomorphic semicocycles]{Continuous and holomorphic semicocycles in Banach spaces}

\author[M. Elin]{Mark Elin}

\address{Department of Mathematics,
         Ort Braude College,
         Karmiel 21982,
         Israel}

\email{mark$\_$elin@braude.ac.il}

\author[F. Jacobzon]{Fiana Jacobzon}

\address{Department of Mathematics,
         Ort Braude College,
         Karmiel 21982,
         Israel}

\email{fiana@braude.ac.il}

\author[G. Katriel]{Guy Katriel}

\address{Department of Mathematics,
         Ort Braude College,
         Karmiel 21982,
         Israel}

\email{katriel@braude.ac.il}

\begin{abstract}
	
We study some fundamental properties of semicocycles over semigroups of self-mappings of a domain in a Banach space.
We prove that any semicocycle over a jointly continuous  semigroup is
itself jointly continuous. For semicocycles over semigroups which have generator,
we establish a sufficient condition for differentiablity with respect to the time variable, and hence for the semicocycle to satisfy a linear evolution problem, giving rise to the notion of `generator' of a semicocycle. Bounds on the growth of a semicocycle with respect to the time variable are given in terms of this generator. 

Special consideration is given to the case of holomorphic semicocycles, for which we prove an exact correspondence between certain uniform continuity properties of a semicocyle
and boundedness properties of its generator.


\end{abstract}

\maketitle


\section{Introduction}\label{sect-intro}

The notions of (semi)-groups and (semi)-cocycles over (semi)-groups play a central role in the theory of dynamical systems (see, for example, \cite{Katok, Latus, K-R, B-P}). These objects consist of families of mappings defined on an appropriate space and satisfying familiar algebraic conditions.
Groups and semigroups solve autonomous dynamical systems. Solutions of certain non-autonomous systems are known to be semicocycles.
Ergodic theory of smooth dynamical systems builds around the study of the derivative cocycle associated either to a map or a flow (see \cite[Chapters 5 and 6]{B-P}) which is the fundamental solution of variational equations (see, for example, \cite[Chapter 6]{Latus}). Depending on context, one considers families parameterized by either  discrete or continuous time. Additional restrictions on the space and members of the families are made, according to the problem of interest.

In this work the space in which semigroups act is a domain in a real or complex Banach space and we concentrate on continuous-time processes. We consider semigroups consisting of either continuous or holomorphic self-mappings of that domain (the book \cite{R-S1} and references therein can be used as a good source for the state of the art on semigroups of holomorphic mappings). In turn, semicocycles over these semigroups consist of one-parameter families of mappings from the same domain to a Banach algebra.  If this algebra is a space of bounded linear operators, then the norm-continuity in our setting is stronger than the strong continuity often suggested. On the other hand, we do not assume local compactness of the base space (cf. \cite{Latus}). Moreover, we study semicocycles over semigroups, which is more general than the usual setting over groups (cf. \cite{B-P, K-R}).

Our aim is to examine certain basic questions regarding semicocycles in Banach spaces, which are analogous to questions that are
studied in the theory of semigroups of mappings.

Section~\ref{sect-prelim} is devoted to some preliminary results from the theory of semigroups. Our main results are contained in Sections~\ref{sect-cont}--\ref{Hol-semico}.

We first consider the question of continuity of a semicocycle. While our definition of the concept only imposes
the requirement that a semicocycle be continuous with respect to the spatial and the time variable separately, we show in Section~\ref{sect-cont} that in
fact semicocycles are automatically jointly continuous with respect to both variables. To do this we employ a result of a
similar nature for semigroups obtained by Chernoff and Marsden \cite{Ch-M}. We also consider stronger types of continuity of semicocycles, which are important in Section~\ref{Hol-semico}, when dealing with holomorphic semicocycles.

Our next aim is to study the question of differentiability of a semicocycle with respect to the time parameter. This question is important, because it is related to the problem of generating semicocycles by linear evolution equations; cf. \cite{Latus}.  Following the analogy with the semigroup theory, we refer to the derivative with respect to time of the semicocycle at time $t=0$, if it exists, as the `generator' of the semicocycle.
Although not every semicocycle is differentiable with respect to time (see Examples~\ref{examp12+} and \ref{examp12b}),
we prove, in Section \ref{sect-diff}, a sufficient condition implying that semicocycles which are $C^1$-smooth with respect to the spatial variable (and satisfy an additional technical assumption) are automatically differentiable with respect to the time parameter, and are hence generated by
an evolution equation. This is a generalization of Theorem 2.2(b) in \cite{EJK}, where the one-dimensional
holomorphic case was investigated.

We also study the growth of the norm of a semicocycle with respect to the time parameter, obtaining bounds on this growth in
terms of the semicocycle generator.

In Section \ref{Hol-semico} we consider the case of holomorphic semicocycles on a bounded domain.
In this case we can characterize the class of semicocycles which are differentiable with respect to the time parameter:
it is precisely the class of semicocycles which have the property of uniform joint continuity. We also show that
a holomorphic semicocycle has the stronger property of $T$-continuity if and only if its generator is bounded
on sets which are strictly inside the domain.

\section{Preliminary results on semigroups}\label{sect-prelim}
\setcounter{equation}{0}

We begin with some standard notations.
Denote by $X$ and $Y$ two Banach spaces endowed with the norms $\|\cdot\|_X$ and $\|\cdot\|_Y$, respectively. Let $\DD\subset X$ and $\Omega\subset Y$ be domains (connected open sets). The set of all mappings continuous (respectively, smooth) on $\DD$ and taking values in $\Omega$ is denoted by $C(\DD,\Omega)$ (respectively, $C^1(\DD,\Omega)$). By $C(\DD)$ (respectively, $C^1(\DD)$) we denote the set of all continuous (smooth) self-mappings of $\DD$.

We begin with some general definitions of different types of continuity for arbitrarily families of mappings $\left\{f_t\right\}_{t\ge0},\ f_t:\DD\to Y$, where  $\DD\subset X$ is a domain in a Banach space $X$, and $Y$ is a Banach space, and relations between them. Recall that a bounded subset $\mathcal{D}^* \subset\mathcal{D}$ is said to lie strictly inside $\DD$ if it is bounded away from the boundary $\partial\DD$, that is, $\displaystyle\inf_{x\in\mathcal{D}_1} {\rm dist}(x,\partial \mathcal{D})>0$.

\begin{defin}\label{def-cont}
The family $\left\{f_t\right\}_{t\ge0},\ f_t \in C(\DD,Y)$ is said to be
\begin{itemize}
\item {\bf{separately continuous}} if for every $x_0\in\DD$, $f_t(x_0)$
is continuous with respect to $t\in[0,\infty)$;

\item {\bf{jointly continuous}} (JC, for short) at
$(t_0,x_0)\in[0,\infty)\times\DD$ if
\[
\lim_{t\to t_0,x\to x_0} f_t(x) =f_{t_0}(x_0).
\]
It is called jointly continuous on $I\times\DD$ ($I\subset[0,\infty)$ an interval) if it is JC at every point
$(t_0,x_0)\in I\times\DD$;

\item {\bf{uniformly jointly continuous}} (UJC, for short) if for every $t_0\ge0$ and $x_0\in\DD$ there exists a neighborhood $U$ of $x_0$  such that $f_t(x)\to f_{t_0}(x)$ as $t\to t_0$, uniformly on $U$.

\item {\bf locally uniformly continuous} ($T$-continuous, for short) at $t=t_0$
if for every subset $\mathcal{D}^*$ strictly inside~$\DD$,
\[
\sup_{x\in\mathcal{D}^*}\|f_t(x)-f_{t_0}(x) \| _Y
\to0\quad\mbox{as}\quad t\to t_0.
\]
It is called $T$-continuous if it is $T$-continuous at every
$t_0\ge0$.
\end{itemize}
\end{defin}

We notice that for the case where $X$ is finite-dimensional, local uniform continuity coincides with  continuity uniform on compact subsets.

The following simple assertion is of general character (not involving algebraic structure) and can be proved using standard arguments.
\begin{propo}\label{th-cont}
Let  $\left\{f_t\right\}_{t\ge0}\subset C(\mathcal{D},Y)$.
\begin{itemize}
  \item [(i)] If the family $\left\{f_t\right\}_{t\ge0}$
  is $T$-continuous, then it is uniformly jointly continuous.

   \item [(ii)] If the family $\left\{f_t\right\}_{t\ge0}$
  is uniformly jointly continuous, then it is jointly continuous on $[0,\infty)\times \DD$.

  \item [(iii)] If $X$ is a finite-dimensional and $\left\{f_t\right\}_{t\ge0}$ is jointly continuous  on $[0,\infty)\times \DD$,
  then it is $T$-continuous.
\end{itemize}
\end{propo}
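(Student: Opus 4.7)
The plan is to prove the three implications in order, with (i) being essentially immediate, (ii) a standard triangle-inequality argument, and (iii) the only one requiring finite-dimensionality via a compactness argument.

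For (i), given $t_0 \ge 0$ and $x_0 \in \DD$, I would pick $r > 0$ small enough that the closed ball $\overline{B(x_0,r)}$ is contained in $\DD$; this closed ball is then a bounded set strictly inside $\DD$ (its distance to $\partial \DD$ is positive). Taking $U = B(x_0,r)$ and applying $T$-continuity with $\mathcal{D}^{*}=U$ gives $\sup_{x\in U}\|f_t(x)-f_{t_0}(x)\|_Y \to 0$ as $t\to t_0$, which is exactly the UJC condition at $(t_0,x_0)$.

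For (ii), fix $(t_0,x_0) \in [0,\infty)\times \DD$. By UJC choose a neighborhood $U$ of $x_0$ on which $f_t \to f_{t_0}$ uniformly as $t \to t_0$. For $(t,x)$ with $x \in U$, I would estimate
\[
\|f_t(x) - f_{t_0}(x_0)\|_Y \;\le\; \sup_{y\in U}\|f_t(y) - f_{t_0}(y)\|_Y \;+\; \|f_{t_0}(x) - f_{t_0}(x_0)\|_Y.
\]
The first term tends to $0$ as $t\to t_0$ by UJC, and the second tends to $0$ as $x\to x_0$ by continuity of $f_{t_0}$ (which holds because $f_{t_0} \in C(\DD,Y)$). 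Letting $(t,x) \to (t_0,x_0)$ yields joint continuity.

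For (iii), I would argue by contradiction combined with compactness. Let $\mathcal{D}^{*}$ be strictly inside $\DD$; since $X$ is finite-dimensional, its closure $K := \overline{\mathcal{D}^{*}}$ is closed, bounded, and still at positive distance from $\partial \DD$, hence compact and contained in $\DD$. Suppose $T$-continuity fails at some $t_0 \ge 0$: then there exist $\varepsilon > 0$, a sequence $t_n \to t_0$, and points $x_n \in \mathcal{D}^{*}$ with $\|f_{t_n}(x_n) - f_{t_0}(x_n)\|_Y \ge \varepsilon$. By compactness of $K$, a subsequence $x_{n_k} \to x_{*} \in K \subset \DD$. Joint continuity on $[0,\infty)\times \DD$ gives $f_{t_{n_k}}(x_{n_k}) \to f_{t_0}(x_{*})$ and also $f_{t_0}(x_{n_k}) \to f_{t_0}(x_{*})$, whence the left-hand side tends to $0$, a contradiction. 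The main (only) obstacle is ensuring the passage from ``strictly inside'' to compactness, which is exactly where the finite-dimensional hypothesis is used and which explains why (iii) cannot be extended to infinite-dimensional $X$.
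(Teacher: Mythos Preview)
Your argument is correct in all three parts and is exactly the kind of standard reasoning the paper has in mind: the paper does not actually prove this proposition, stating only that it ``can be proved using standard arguments.'' The one place worth tightening is (i), where to guarantee that $\overline{B(x_0,r)}$ is strictly inside $\DD$ you should choose $r$ with $B(x_0,2r)\subset\DD$, so that every point of $\overline{B(x_0,r)}$ is at distance at least $r$ from $\partial\DD$; with that adjustment the proof is complete.
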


We  mention that while separate continuity and joint continuity are very standard notions,  uniform types of continuity (UJC and $T$-continuity) are more specialized ones. They are mainly useful for the case of families of holomorphic mappings due to the following assertion, which follows from the Cauchy inequality for derivatives (see, for example, \cite[Proposition 2.3]{R-S1}).
Recall that if Banach spaces $X$ and $Y$ are complex, a mapping $F:\DD\to Y,\ \DD\subset X$, is said to be holomorphic if it is Frech\'{e}t differentiable
 at each point $x\in \mathcal{D}$. By $\Hol(\DD,\Omega)$ we denote the set of all holomorphic mappings on $\mathcal{D}$ with values in $\Omega\subset Y$. By $\Hol(\DD)$ we denote the set of all holomorphic self-mappings of $\DD$.

\begin{lemma}\label{lem-T-hol}
If a family  $\left\{f_t\right\}_{t\ge0}\subset \Hol(\DD,Y)$  is uniformly jointly continuous (respectively, $T$-continuous), then the family of the Frech\'{e}t derivatives $\left\{{f_t}'\right\}_{t\ge0}\subset \Hol(\DD,L(X,Y))$ is uniformly jointly continuous (respectively, $T$-continuous).
\end{lemma}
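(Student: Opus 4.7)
The plan is to deduce both statements from the Cauchy inequality for the Fr\'{e}chet derivative of a Banach-valued holomorphic map. Recall that if $g\in\Hol(\DD,Y)$ and the closed ball $\overline{B(x,r)}$ lies inside $\DD$, then
\[
\|g'(x)\|_{L(X,Y)}\le\frac{1}{r}\sup_{z\in B(x,r)}\|g(z)\|_Y.
\]
Apply this to the holomorphic map $g=f_t-f_{t_0}$; everything else is a matter of choosing the ball of a suitable radius.

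For the $T$-continuous case, fix $t_0\ge 0$ and a subset $\mathcal{D}^*$ strictly inside $\DD$. Since $\mathcal{D}^*$ is bounded away from $\pl\DD$, I can pick $r>0$ so that the enlarged set $\mathcal{D}^{**}:=\{z\in X:{\rm dist}(z,\mathcal{D}^*)\le r\}$ still lies strictly inside $\DD$. Then for every $x\in\mathcal{D}^*$ we have $B(x,r)\subset\mathcal{D}^{**}$, and the Cauchy estimate gives
\[
\sup_{x\in\mathcal{D}^*}\|f_t'(x)-f_{t_0}'(x)\|_{L(X,Y)}\le\frac{1}{r}\sup_{z\in\mathcal{D}^{**}}\|f_t(z)-f_{t_0}(z)\|_Y,
\]
and the right-hand side tends to $0$ as $t\to t_0$ by the $T$-continuity of $\{f_t\}$ applied to $\mathcal{D}^{**}$. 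Thus $\{f_t'\}$ is $T$-continuous at $t_0$.

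For the UJC case, fix $t_0\ge 0$ and $x_0\in\DD$. By hypothesis there is a neighborhood $V$ of $x_0$ on which $f_t\to f_{t_0}$ uniformly. Choose $r>0$ small enough that $B(x_0,2r)\subset V\cap\DD$. Then for every $y\in B(x_0,r)$ the ball $B(y,r)$ is contained in $B(x_0,2r)$, so the Cauchy inequality yields
\[
\|f_t'(y)-f_{t_0}'(y)\|_{L(X,Y)}\le\frac{1}{r}\sup_{z\in B(x_0,2r)}\|f_t(z)-f_{t_0}(z)\|_Y,
\]
and the right-hand side (which is independent of $y$) tends to $0$ as $t\to t_0$. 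Hence $U:=B(x_0,r)$ is the required neighborhood witnessing UJC for $\{f_t'\}$.

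There is no real obstacle here once the Cauchy estimate is in place; the only point requiring care is the choice of radii, namely to ensure both that the larger set to which the Cauchy inequality is applied still lies inside $\DD$ (respectively, strictly inside $\DD$), and that the right-hand bound is uniform in the spatial variable so that the $t\to t_0$ limit can be taken uniformly.
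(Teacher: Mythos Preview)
Your proof is correct and follows exactly the approach indicated in the paper, which simply states that the lemma ``follows from the Cauchy inequality for derivatives'' without spelling out the details. Your argument makes those details explicit: applying the Cauchy estimate to $g=f_t-f_{t_0}$ and choosing radii so that the enlarged set on which the supremum is taken remains (strictly) inside $\DD$ and independent of the spatial variable.
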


One central goal of the theory of dynamic systems is the
study of one-parameter semigroups.

\begin{defin}\label{def-sg-hol}
A family $\mathcal{F} =\left\{F_t\right\}_{t\ge0}\subset C(\DD)$ is called a one-parameter continuous semigroup (semigroup, for short) on $\DD$ if the following properties hold

(i) $F_{t+s}=F_t \circ F_s$ for all $t,s\geq 0$;

(ii) For all $x\in \mathcal{D}$, $\displaystyle \lim_{t\to
0^+}F_t(x) =x.$
\end{defin}

Note that, by this definition, a semigroup is separately continuous. It turns on that the algebraic structure automatically implies
joint continuity for $t>0$. The following theorem was proved in \cite{Ch-M} (see also \cite{Ball}), in the general setting of semigroups on metric spaces.

\begin{theorem}\label{th-Ch-M-B}
Let $\mathcal{F}=\{F_t\}_{t\ge0}$ be a semigroup on a metric space
$Z$. Then $\Ff$ is jointly continuous on $(0,\infty)\times Z$.
Moreover, if $Z$ is locally compact then $\Ff$ is
jointly continuous on $[0,\infty)\times Z.$
\end{theorem}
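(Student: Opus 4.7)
The plan is a Baire-category argument that produces a local equicontinuity estimate, followed by a semigroup-theoretic propagation step; the locally compact case is then handled by a compactness variant of the same idea.

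\textbf{Step 1 (Baire).} For each $\varepsilon>0$ and $n\in\N$ set
\[
A_n^\varepsilon = \bigl\{z\in Z: d(F_s(z),z)\le \varepsilon\text{ for all }s\in[0,1/n]\bigr\}.
\]
Separate continuity of $s\mapsto F_s(z)$ lets us restrict the intersection to a countable dense subset of $[0,1/n]$, and each set $\{z:d(F_s(z),z)\le\varepsilon\}$ is closed by continuity of the individual map $F_s$, so $A_n^\varepsilon$ is closed. Axiom (ii) yields $Z=\bigcup_n A_n^\varepsilon$. Since the spaces arising in the paper's applications (domains in Banach spaces, or more generally complete or locally compact metric spaces) are Baire, some $A_N^\varepsilon$ has non-empty interior, i.e.\ contains a ball $B(z_*,r)$ on which the family $\{F_s\}_{s\in[0,1/N]}$ is uniformly close to the identity.

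\textbf{Step 2 (propagation and joint continuity).} Fix $(t_0,x_0)$ with $t_0>0$ and prescribed $\varepsilon>0$. Choose $\sigma\in(0,\min(t_0,1/N))$ small; the preimage $V=F_\sigma^{-1}(B(z_*,r))$ is open by continuity of $F_\sigma$, and picking a suitable $\sigma$ one arranges $x_0\in V$ (one may first reduce to the case $x_0$ itself lies near the orbit of some point of $B(z_*,r)$, using the semigroup identity $F_{t_0}=F_{t_0-\sigma}\circ F_\sigma$ to absorb the discrepancy into the already-continuous outer map). The uniform estimate from Step~1 applied at the point $F_\sigma(x)\in B(z_*,r)$ gives, for $x\in V$ and $|u-\sigma|\le 1/N$,
\[
d\bigl(F_u(x),F_\sigma(x)\bigr)\le \varepsilon.
\]
Writing $F_t(x)=F_{t_0-\sigma}(F_{u}(x))$ with $u=t-t_0+\sigma$, continuity of $F_{t_0-\sigma}$ in the spatial variable together with this equicontinuity and the separate continuity $F_t(x_0)\to F_{t_0}(x_0)$ yields, via the triangle inequality
\[
d\bigl(F_t(x),F_{t_0}(x_0)\bigr)\le d\bigl(F_t(x),F_t(x_0)\bigr)+d\bigl(F_t(x_0),F_{t_0}(x_0)\bigr),
\]
joint continuity at $(t_0,x_0)$. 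If $Z$ is locally compact, a compact neighborhood of $x_0$ lets us dispense with Baire altogether: separate continuity on a compactum upgrades to uniform (Dini-type) convergence, and the same triangle estimate works down to $t_0=0$, proving the second assertion.

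\textbf{Main obstacle.} The delicate step is the transport in Step~2: the Baire ball $B(z_*,r)$ is furnished at an arbitrary, unspecified point $z_*$, and one must use the semigroup property to translate the equicontinuity information to a neighborhood of the given $x_0$. This requires choosing $\sigma>0$, which is precisely why joint continuity at $t=0$ cannot be obtained this way in the non-locally-compact case—reflecting the theorem's restriction to $(0,\infty)\times Z$ in its first assertion.
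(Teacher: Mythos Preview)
First, note that the paper does not prove this theorem: it is quoted from Chernoff--Marsden and Ball as a known result, so there is no in-paper proof to compare yours against.

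Your argument has a genuine gap in Step~2. The Baire step produces a ball $B(z_*,r)$ at an \emph{uncontrolled} location $z_*$, and to exploit it at the prescribed point $x_0$ you need $F_\sigma(x_0)\in B(z_*,r)$ for some admissible $\sigma\in(0,\min(t_0,1/N))$. There is no reason whatsoever that the orbit $\{F_\sigma(x_0):\sigma\ge 0\}$ ever meets $B(z_*,r)$. Your parenthetical ``one may first reduce to the case $x_0$ itself lies near the orbit of some point of $B(z_*,r)$'' is exactly the missing step, and the semigroup identity $F_{t_0}=F_{t_0-\sigma}\circ F_\sigma$ does not supply it: the outer map $F_{t_0-\sigma}$ is continuous, but that tells you nothing about how to place $F_\sigma(x_0)$ inside the good ball. (For \emph{group} actions this transport is trivial because one can translate the ball by the group; for semigroups of non-invertible maps it is not.) A secondary issue is that you silently add the hypothesis that $Z$ is a Baire space, which the statement does not assume.

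The locally compact paragraph is also not a proof as written. The claim that ``separate continuity on a compactum upgrades to uniform (Dini-type) convergence'' is false without monotonicity: for instance $g_t(x)=t^{-1}xe^{-x/t}$ on $[0,1]$ converges to $0$ pointwise as $t\to 0^+$, with continuous limit, yet $\sup_x g_t(x)=e^{-1}$ for all small $t$. Compactness of a neighborhood of $x_0$ does make that neighborhood a Baire space, so Step~1 goes through locally---but then you are back to the same transport problem, since Baire hands you a ball centered at some $z_*$ in the neighborhood, not at $x_0$. You would need an additional argument (or a different strategy altogether) to obtain equicontinuity \emph{at} $x_0$.
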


A counterexample of a semigroup which is not jointly continuous on $[0,\infty)\times Z$  was given in \cite{Ch}. Also, it is not true in general that semigroups are UJC. For example, any semigroup of linear operators which is not uniformly continuous is not UJC.

We now recall some facts related to the differentiability of a semigroup with respect to its parameter~$t$.

\begin{defin}\label{def-sg-gen}
Let $\mathcal{F} =\left\{F_t\right\}_{t\ge0}\subset C(\DD)$ be a semigroup on $\DD$.
If the limit
  \begin{equation}\label{gener}
f(x)= \lim_{t\to 0^{+}}\frac{1}{t} \left[ F_t(x) -x\right]
\end{equation}
 exists for every $x \in \DD$, then we say that $\mathcal{F}$ is differentiable with respect to its parameter~$t$, and  the mapping $f : \DD \to X$ defined by \eqref{gener} is called the {\sl (infinitesimal) generator} of the semigroup~$\mathcal{F}.$
%
\end{defin}

Note that if the limit \eqref{gener} exists, then the semigroup $\mathcal{F}$ solves  the Cauchy problem

\begin{equation}  \label{nS1}
\left\{
\begin{array}{l}
\displaystyle
\frac{\partial u(t,x)}{\partial t}=f(u(t,x)) \vspace{2mm} \\
u(0,x)=x,%
\end{array}%
\right.
\end{equation}%
where we set $u(t,x)=F_{t}(x)$.

Although on the best of our knowledge, no differentiability criterion for general semigroups is known, for semigroups of holomorphic mappings the following fact was proven by Reich and Shoikhet (see \cite[Theorems 6.8--6.9]{R-S1}).

\begin{theorem}\label{th-RS}
  Let $\Ff\subset\Hol(\DD)$ be a semigroup on a bounded domain in a complex Banach space $X$. Then $\Ff$ is $T$-continuous if and only if for each $x\in\DD$ the limit in \eqref{gener} exists, uniformly on subsets strictly inside $\mathcal{D}$, and defines mapping $f\in\Hol(\DD,X)$ which is bounded on each subset strictly inside $\DD$.
\end{theorem}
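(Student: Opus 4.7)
The plan is to prove the two implications separately.

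For the sufficiency direction $(\Leftarrow)$, suppose $f$ arises as the uniform limit in \eqref{gener} on strictly interior subsets and is bounded on each such subset. I would first establish $T$-continuity at $t_0 = 0$: fixing $\DD^*$ strictly inside $\DD$, the uniform convergence gives $\sup_{x \in \DD^*}\|F_s(x) - x - s f(x)\|_X = o(s)$ as $s \to 0^+$, and since $\sup_{\DD^*}\|f\|_X < \infty$, $T$-continuity at $0$ follows. For $t_0 > 0$ I would exploit the semigroup identity $F_{t_0+s}(x) - F_{t_0}(x) = F_{t_0}(F_s(x)) - F_{t_0}(x)$: by Cauchy estimates (as in Lemma~\ref{lem-T-hol}), the holomorphic self-map $F_{t_0}$ is Lipschitz on any set strictly inside $\DD$, and for small $s$ the image $F_s(\DD^*)$ remains in a slightly enlarged strictly interior neighborhood, which reduces right $T$-continuity at $t_0$ to $T$-continuity at $0$. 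Left continuity is handled dually: Theorem~\ref{th-Ch-M-B} ensures $F_{t_0-s}(\DD^*)$ stays in a fixed strictly interior set for small $s$, and then the identity $F_{t_0}(x) - F_{t_0-s}(x) = F_s(F_{t_0-s}(x)) - F_{t_0-s}(x)$ again reduces matters to $T$-continuity at $0$.

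For the necessity direction $(\Rightarrow)$, assume $\Ff$ is $T$-continuous. The goal is to produce the generator $f$ and verify the uniform convergence, holomorphy, and boundedness properties on strictly interior subsets. My plan has four steps. First, promote the qualitative decay $\sup_{\DD^*}\|F_t(x) - x\|_X \to 0$ to a quantitative bound $\sup_{x\in\DD^*}\|F_t(x) - x\|_X \le Ct$ on some $[0,\delta]$. This is achieved by iterating the semigroup $F_{nt} = F_t \circ \cdots \circ F_t$ and telescoping, combined with the fact that holomorphic self-maps of a bounded domain are non-expansive with respect to the Kobayashi metric, which is equivalent to the norm on strictly interior subsets. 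Second, once this $O(t)$ bound is in hand, the family $\{(F_t - \Id)/t\}_{0 < t \le \delta}$ is uniformly bounded on strictly interior sets; applying Cauchy's inequality in the spatial variable yields uniform control on the derivatives of this family, so a Montel-type normal family argument produces limit points in the topology of uniform convergence on strictly interior sets. Third, uniqueness of the limit (hence convergence without subsequences) follows from the fact that any two limit points would both serve as a right-derivative of $t \mapsto F_t(x)$ at $t = 0$ and must therefore coincide; this also shows the limit $f$ is the same pointwise limit as in \eqref{gener}. Finally, $\|f(x)\|_X \le C$ on $\DD^*$ comes from passing to the limit in the step-one Lipschitz estimate, and $f \in \Hol(\DD,X)$ follows from Weierstrass-type stability of holomorphy under uniform convergence on strictly interior subsets.

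The critical technical difficulty is step one of the necessity direction: upgrading the qualitative $T$-continuity at $0$ to the quantitative bound $\|F_t(x) - x\|_X = O(t)$ on strictly interior sets. The iterative telescoping argument requires uniform Lipschitz control of the maps $F_t$ with constant independent of $t$; the Kobayashi-metric non-expansiveness supplies this in the intrinsic geometry, but transferring these bounds back to norm estimates on strictly interior subsets, while classical, must be handled carefully since the comparison constants between the norm and the Kobayashi distance degenerate near $\partial\DD$. Once this a priori speed estimate is established, the remainder of the argument proceeds via standard complex-analytic compactness and Cauchy estimates.
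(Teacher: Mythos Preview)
First, note that the paper does \emph{not} contain its own proof of Theorem~\ref{th-RS}: this result is quoted from Reich--Shoikhet \cite[Theorems~6.8--6.9]{R-S1} and stated without proof in Section~\ref{sect-prelim}. So there is no paper proof to compare your proposal against; the relevant comparison would be with the original argument in \cite{R-S1}.

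That said, your proposal has a genuine gap in the necessity direction. In step two you invoke a ``Montel-type normal family argument'' to extract limit points of the family $\{(F_t-\Id)/t\}_{0<t\le\delta}$. This fails in infinite-dimensional Banach spaces: Montel's theorem is a compactness result, and bounded sets in $\Hol(\DD,X)$ are not relatively compact when $X$ is infinite-dimensional (closed bounded sets in $X$ are not compact, so Arzel\`a--Ascoli does not apply). Uniform boundedness of the difference quotients together with Cauchy estimates on derivatives gives equicontinuity, but without pointwise relative compactness you cannot conclude the existence of convergent subsequences. This is not a technicality that can be patched with the same ideas; the infinite-dimensional proof requires a different mechanism to produce the limit $f$ --- typically a resolvent/fixed-point construction or a product-formula approach, as in \cite{R-S1}, rather than a normal-families extraction.

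A smaller issue: in the sufficiency direction, your treatment of left $T$-continuity at $t_0>0$ asserts that Theorem~\ref{th-Ch-M-B} ensures $F_{t_0-s}(\DD^*)$ remains in a fixed strictly interior set. Joint continuity alone does not give this uniform spatial control in infinite dimensions; cf.\ the paper's discussion of the ``acts strictly inside'' property following Theorem~\ref{th-RS}, where it is explicitly noted that this is not known to hold automatically. You would need to argue this separately, for instance via the Kobayashi-metric non-expansiveness you already invoke elsewhere.
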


Recall in this connection that one of the surprising features of the infinite-dimensional holomorphy is that the inclusion $f\in\Hol(\mathcal{D},Y)$ does not imply that $f$ is bounded on all subsets strictly inside $\DD$ (see \cite{IJM-SLL-84, Har, R-S1}).

The following notion concerns the flow's spread over finite times.

\begin{defin}
Let $\Ff=\{F_t(x)\}_{t\geq 0}$ be a semigroup on a domain $\DD$. We say that $\Ff$ acts strictly inside $\DD$ if for every subset $\DD^*$ strictly inside $\DD$ and every $t_0>0$ the set $\{F_t(x):\ x\in\DD^*,\ t\in[0,t_0] \}$ lies strictly inside $\DD$.
\end{defin}

In the case where $X$ is finite-dimensional, any semigroup acts strictly inside by compactness. On the other hand, we do not know  whether in the infinite-dimensional case every semigroup $\Ff\in C(\DD)$ acts strictly inside. Clearly, if $\DD$ is the union of $\Ff$-invariant subsets each one of which is strictly inside $\DD$, then $\Ff$  acts strictly inside.
Moreover, it can be shown that if a semigroup $\Ff$ consists of holomorphic self-mappings of a domain $\DD$ equipped with the hyperbolic metric (see \cite{HL-79, G-R} and \cite[Section 3.6]{R-S1}), then  $\Ff$ acts strictly inside.

\section{Semicocycles and their continuity}\label{sect-cont}
\setcounter{equation}{0}

The main object of study in this work is the notion of
{\it{semicocycle}}. Throughout, we assume that $\A$ is a complex unital Banach algebra with
the unity $1_A$ such that $\|1_\A\|_\A=1$.

\begin{defin}\label{semicocycle}
Let $\mathcal{F}=\{F_t\}_{t\ge0}\subset C(\DD)$ be a semigroup. The family $\left\{\Gamma_t\right\}_{t\ge0}$ of mappings $\Gamma_t:\DD\rightarrow \A$ is called a semicocycle over $\mathcal{F}$ if it satisfies the following:

\begin{itemize}
  \item [(a)] the chain rule:
$\Gamma_t(F_s(x))\Gamma_s(x) = \Gamma_{t+s}(x)$ for all $t,s\ge0$
and $x\in\DD$;

\item [(b)] $\displaystyle \lim_{t\to 0^+}\Gamma_t(x)= 1_\A$ for
every $x \in \DD$.
\end{itemize}
\end{defin}

The simplest example of a semicocycle over $\mathcal{F}\subset C^1(\DD)$ is given by the Frech\'{e}t
derivatives of the semigroup: $\Gamma_t(x)={F_t}'(x)$, where $\A=L(X)$. Another
simple example is one independent of~$x$. Such semicocycles  coincide with uniformly continuous semigroups $\{\Gamma_t(x)=e^{ta}\}_{t\ge0},\ a\in\A$. Additional examples of semicocycles are given
by $\Gamma_t(x)= M(F_t(x))M(x)^{-1}$, where $M\in C(\DD,\A_*)$.
Our interest in semicocycles is motivated, in part, by the following construction.

\begin{propo}\label{th-sg}
    Let $X$ and $Y$ be complex Banach spaces, $\A=L(Y)$ be the algebra of bounded linear operators on $Y$. Assume that $\Ff=\{F_t\}_{t\ge0}\subset C(\DD)$ is a semigroup on a domain $\DD\subset X$ and $\Gamma_t:\R^+\to C(\DD,\A)$. The family $\left\{\Gamma_t\right\}_{t\ge0}$ is a
    semicocycle over $\Ff$ if and only if the family ${\widetilde \Ff}
    =\left\{\widetilde {F}_t\right\}_{t\ge0}$ defined by
    $\widetilde{F}_t(x,y)= \left(F_t (x), \Gamma_t (x) y \right)$
    forms a one-parameter continuous semigroup on the domain
    $\DD\times Y$.
\end{propo}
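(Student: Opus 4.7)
Both implications are verifications of defining axioms, with the algebraic heart being that the semicocycle chain rule (a) is exactly what is required for the composition law of $\widetilde\Ff$ to hold on the second coordinate. I would carry out the two directions separately.

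For the forward direction, assume $\{\Gamma_t\}$ is a semicocycle over $\Ff$. The composition law for $\widetilde\Ff$ is the direct computation
\[
\widetilde F_t(\widetilde F_s(x,y))=\bigl(F_t(F_s(x)),\,\Gamma_t(F_s(x))\Gamma_s(x)y\bigr)=\bigl(F_{t+s}(x),\,\Gamma_{t+s}(x)y\bigr)=\widetilde F_{t+s}(x,y),
\]
obtained by combining the semigroup law of $\Ff$ with chain rule (a). Pointwise continuity $\widetilde F_t(x,y)\to(x,y)$ as $t\to 0^+$ follows coordinate-wise from $F_t(x)\to x$ and from axiom (b), which gives $\Gamma_t(x)\to 1_\A$ and hence $\Gamma_t(x)y\to y$. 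Spatial continuity $\widetilde F_t\in C(\DD\times Y)$ follows from $F_t\in C(\DD)$ and $\Gamma_t\in C(\DD,\A)$ together with the joint continuity of the operator-vector multiplication $L(Y)\times Y\to Y$.

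For the backward direction, assume $\widetilde\Ff$ is a continuous semigroup on $\DD\times Y$. Reading off the second coordinate of $\widetilde F_{t+s}(x,y)=\widetilde F_t(\widetilde F_s(x,y))$ gives $\Gamma_{t+s}(x)y=\Gamma_t(F_s(x))\Gamma_s(x)y$ for every $y\in Y$, which yields the operator identity $\Gamma_{t+s}(x)=\Gamma_t(F_s(x))\Gamma_s(x)$, i.e. axiom (a). Likewise, the semigroup continuity $\widetilde F_t(x,y)\to(x,y)$ at $t=0$ yields $\Gamma_t(x)y\to y$ for each $y\in Y$, which is precisely condition (b) on $\Gamma_t(x)\to 1_\A$. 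The spatial continuity of $\Gamma_t$ as a map into $\A$ is inherited from that of $\widetilde F_t$ by fixing $y$.

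I do not expect any genuine obstacle: the proof is essentially an unpacking of definitions, and the proposition is a notational identification between a chain rule parametrized by $\DD$ and a genuine semigroup law on the extended phase space $\DD\times Y$. The only mildly delicate point is topological, namely that the semigroup continuity of $\widetilde\Ff$ naturally supplies pointwise/strong-operator convergence $\Gamma_t(x)y\to y$; this matches the interpretation of axiom (b) adopted in the paper when $\A=L(Y)$, and it is the reason why the backward direction reads off cleanly.
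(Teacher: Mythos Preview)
Your argument is essentially identical to the paper's: both verify the two semigroup axioms for $\widetilde\Ff$ coordinate-wise and read off the equivalences with conditions (a) and (b) of Definition~\ref{semicocycle}. The paper simply asserts that $\lim_{t\to 0^+}\Gamma_t(x)=1_\A$ if and only if $\lim_{t\to 0^+}\widetilde F_t(x,y)=(x,y)$ without commenting on the strong-versus-norm convergence subtlety you flag in your final paragraph, so on that point your proof and the paper's are equally (in)complete; note, however, that the paper does take (b) to mean norm convergence in $\A$, so your remark that strong convergence ``matches the interpretation of axiom (b) adopted in the paper'' is not quite accurate.
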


The semigroup $\widetilde{\Ff}$ was studied in \cite{E-2011} as an extension operator for semigroups of holomorphic mappings. We see that such operators necessarily involve semicocycles. Note also that the extended semigroup $\mathcal{\widetilde F}$ is often referred to as a linear skew-product flow; see, for example \cite{Latus, B-P}.

\begin{proof}
    First, a straightforward calculation shows that
    \begin{eqnarray*}
        &&\left(\widetilde{F}_s\circ \widetilde{F}_t\right) (x,y)=
        \widetilde{F}_s\left(F_t(x),\Gamma_t(x)y \right) \\
        &=&\! \left(F_s(F_t(x)) , \Gamma_s(F_t(x)) \Gamma_t(x)y \right) \\
        &=& \!  \left(F_{s+t}(x) , \Gamma_s(F_t(x)) \Gamma_t(x)y \right),
    \end{eqnarray*}
    that is, $\mathcal{\widetilde F}$ satisfies the semigroup property
    if and only if $\left\{\Gamma_t,\ t\ge0\right\}$ satisfies the
    chain rule. In addition,
    \[
    \lim_{t\to0^+}\widetilde{F}_t(x,y)=\lim_{t\to0^+}\left(F_t(x),
    \Gamma_t(x)y \right) =\left(x, \displaystyle \lim_{t\to
        0^+}\Gamma_t(x) y \right).
    \]
    Hence, $\displaystyle \lim_{t\to 0^+}\Gamma_t(x)=1_\A$  if and only if $\lim\limits_{t\to0^+}
    \widetilde{F}_t(x,y)=(x,y)$.
\end{proof}

We now proceed with the study of relations between different types of continuity of semicocycles.  First we prove that semicocycles are {\it{always}} jointly continuous. We will see that while Theorem~\ref{th-Ch-M-B} implies the joint continuity of semicocycles for $t>0$, the joint continuity at $t=0$ will require an additional argument. Note that, due to the algebraic relations,  joint continuity of a semicocycle on $\{0\}\times\DD$ can be extended to $[0,\infty)\times\DD$ -- but the other direction is not so immediate.

\begin{theorem}\label{th-JC}
        Every  semicocycle $\left\{\Gamma_t\right\}_{t\ge 0}\subset C(\mathcal{D},\A)$ over a semigroup $\mathcal{F}\subset C(\mathcal{D})$ is jointly continuous on  $(0,\infty)\times\DD$. If $\Ff$  is jointly continuous on    ${[0,\infty)\times \mathcal{D}}$, then $\left\{\Gamma_t\right\}_{t\ge0}$ is jointly continuous on    ${[0,\infty)\times \mathcal{D}}$.
\end{theorem}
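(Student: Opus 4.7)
The strategy is to lift the semicocycle to an auxiliary semigroup and invoke the Chernoff--Marsden result (Theorem~\ref{th-Ch-M-B}). Proposition~\ref{th-sg} is formulated only for $\A = L(Y)$, so I first reduce the general Banach-algebra case to that setting via the left regular representation: the map $a \mapsto L_{a}$, where $L_a b := ab$, is an isometric unital algebra embedding $\A \hookrightarrow L(\A)$, so $\Gamma'_t(x) := L_{\Gamma_t(x)}$ defines a semicocycle over $\Ff$ with values in $L(\A)$. Proposition~\ref{th-sg} applied with $Y = \A$ then produces the one-parameter continuous semigroup
$$\widetilde{F}_t(x,y) = \bigl(F_t(x),\ \Gamma_t(x) y\bigr)$$
on the domain $\DD \times \A$ in the Banach space $X \times \A$.

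For the first assertion, Theorem~\ref{th-Ch-M-B} applied to $\widetilde{\Ff}$ yields joint continuity on $(0,\infty) \times (\DD \times \A)$. Specializing the $\A$-component to $y \equiv 1_\A$ and using $\Gamma_t(x)\cdot 1_\A = \Gamma_t(x)$, the JC of $\widetilde{F}_t$ at the point $(t_0, x_0, 1_\A)$ collapses to the JC of $\Gamma_t$ at $(t_0, x_0)$, valid for every $(t_0, x_0) \in (0,\infty) \times \DD$.

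For the second assertion, Theorem~\ref{th-Ch-M-B} does not reach $t_0 = 0$, so I bootstrap from the interior JC via the chain rule. Take a sequence $(t_n, x_n) \to (0, x_0)$. By condition (b) of Definition~\ref{semicocycle}, $\Gamma_s(x_0) \to 1_\A$ as $s \to 0^+$, so I fix an auxiliary time $s > 0$ small enough that $\|\Gamma_s(x_0) - 1_\A\|_\A < \tfrac{1}{2}$; the Neumann series then shows $\Gamma_s(x_0)$ is invertible in $\A$. The chain rule gives
$$\Gamma_{s+t_n}(x_n) = \Gamma_s\bigl(F_{t_n}(x_n)\bigr)\, \Gamma_{t_n}(x_n).$$
The hypothesis that $\Ff$ is JC at $(0, x_0)$ yields $F_{t_n}(x_n) \to x_0$, so spatial continuity of $\Gamma_s$ gives $\Gamma_s(F_{t_n}(x_n)) \to \Gamma_s(x_0)$; in particular these elements are invertible for large $n$ with inverses converging to $\Gamma_s(x_0)^{-1}$. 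By the JC established in the first part, applied at $(s, x_0) \in (0,\infty)\times\DD$, we have $\Gamma_{s+t_n}(x_n) \to \Gamma_s(x_0)$. Solving the chain rule for $\Gamma_{t_n}(x_n)$ and passing to the limit yields $\Gamma_{t_n}(x_n) \to \Gamma_s(x_0)^{-1}\Gamma_s(x_0) = 1_\A$, which is exactly the missing JC at $(0, x_0)$.

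The delicate step is the last one: the Chernoff--Marsden theorem cannot reach $t_0 = 0$, so the boundary JC must be transferred from the interior via the chain rule, and for that transfer to work I need invertibility of $\Gamma_s$ at both $x_0$ and at the approximating points $F_{t_n}(x_n)$, which is why the small-$s$ choice is essential. Everything else is essentially packaging the hypothesis through Proposition~\ref{th-sg}.
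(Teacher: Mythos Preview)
Your proof is correct. The first half (joint continuity on $(0,\infty)\times\DD$ via the left regular representation, Proposition~\ref{th-sg}, and Chernoff--Marsden) is exactly the paper's argument.

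For the extension to $t=0$, your route is genuinely different from the paper's and more direct. The paper introduces the averaged function $V(t,x)=\int_{t_0}^{t}\Gamma_s(x)\,ds$, derives the identity $V(t_1,F_t(x))\Gamma_t(x)=V(t+t_1,x)-V(t+t_0,x)$ from the chain rule, and then inverts $V(t_1,F_t(x))$ to express $\Gamma_t(x)$ in terms of quantities already known to be jointly continuous on $(0,\infty)\times\DD$. You instead apply the chain rule at a single fixed time $s>0$ chosen so that $\Gamma_s(x_0)$ is invertible, and then invert $\Gamma_s(F_{t_n}(x_n))$ directly, using only the spatial continuity of $\Gamma_s$ and the interior joint continuity from the first part. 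Your argument avoids any integration and is shorter; the paper's $V$-construction has the compensating advantage that the same object $V$ is reused as a central tool later in the paper (Lemma~\ref{lem-V} and the proof of Theorem~\ref{th-differentiability}), so its appearance here is partly foreshadowing. Both arguments hinge on the same two ingredients---invertibility near $1_\A$ and transfer from the interior via the chain rule---but yours packages them more economically for the purpose at hand.
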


\begin{proof}
    First we show that $\left\{\Gamma_t\right\}_{t\ge0}$ is jointly continuous on ${(0,\infty)\times \mathcal{D}}$. Each element of the algebra $\A$ can be considered as the bounded linear operator of left multiplication on $\A$, hence one can say that $\Gamma_t(x)\in L(\A)$. Therefore the semigroup
$\mathcal{\widetilde F}$ constructed in Proposition~\ref{th-sg} acts also on the metric space $\DD\times \A$ and by Theorem~\ref{th-Ch-M-B} is therefore jointly continuous on $(0,\infty) \times \DD$. In addition, we have
\[
\widetilde{F}_t(x,a)-\widetilde{F}_{t_0}(x_0,a) =
\left(F_t(x)-F_{t_0}(x_0),
\left(\Gamma_t(x)-\Gamma_{t_0}(x_0)\right)a \right).
\]
Thus the joint continuity of  $\mathcal{\widetilde F}$ implies
that $\displaystyle\lim_{t\to t_0,x\to x_0} \Gamma_t(x)=
\Gamma_{t_0}(x_0)$.

    It remains to prove the joint continuity at points in $\{0\}\times \mathcal{D}$. We now fix
    $x_0\in \mathcal{D}$, and will prove the joint continuity at $(0,x_0)$.

    We fix $t_0>0$ so that $\Gamma_{t_0}(x_0)$ is invertible (this is possible since $\Gamma_t(x_0)\rightarrow \Gamma_0(x_0)=1_{\A}$ as
    $t_0\rightarrow 0$).

    Let us define, for the chosen $t_0$, the following function ${V:[0,\infty)\times \mathcal{D}\rightarrow \A}$:
    $$V(t,x)=\int_{t_0}^t \Gamma_s(x)ds.$$
    The joint continuity of $\left\{\Gamma_t\right\}_{t\ge0}$ on ${(0,\infty)\times \mathcal{D}}$ implies that $V$ is
    also jointly continuous on ${(0,\infty)\times \mathcal{D}}$.
    Noting that
    $$\lim_{t\rightarrow t_0}\frac{1}{t-t_0}V(t,x_0)=\Gamma_{t_0}(x_0),$$
    and using the assumption that $\Gamma_{t_0}(x_0)$ is invertible,
    we see that we can choose $t_1>0$ sufficiently close to $t_0$ so that $\frac{1}{t_1-t_0}V(t_1,x_0)$ is invertible,
    and hence $V(t_1,x_0)$ is also invertible. By continuity of $V$ we can also ensure that $V(t_1,x)$
    is invertible for $x$ in some neighborhood of $x_0$.

    Using the semicocycle property
    $\Gamma_{t+s}(x)=\Gamma_s(F_t(x))\Gamma_t(x)$, we get, for
    any~$t$,
    \begin{eqnarray*}
    &&V(t_1,F_t(x))\Gamma_t(x)=\int_{t_0}^{t_1} \Gamma_s(F_t(x))
    \Gamma_t(x)ds  \nonumber \\
    &=&\int_{t_0}^{t_1} \Gamma_{t+s}(x)ds =\int_{t+t_0}^{t+t_1}
    \Gamma_{s}(x)ds
    \nonumber  \\
    &=&V(t+t_1,x)-V(t+t_0,x).
    \end{eqnarray*}
    Since $V(t_1,x)$ is invertible for $x$ in a neighborhood of $x_0$, and the semigroup is jointly continuous,
    we have that $V(t_1,F_t(x))$ is invertible for $(t,x)$ sufficiently close to $(0,x_0)$, so that
    we can write the above as
    \begin{equation}\label{kr}\Gamma_t(x) =V(t_1,F_t(x))^{-1}[ V(t+t_1,x)-V(t+t_0,x)].\end{equation}
    Now the joint continuity of $V$ in $(0,\infty)\times \mathcal{D}$, and the joint continuity of $\Ff$, imply that
    $$\lim_{(t,x)\rightarrow (0,x_0)}\Gamma_t(x) =\lim_{(t,x)\rightarrow (0,x_0)} V(t_1,F_t(x))^{-1}[ V(t+t_1,x)-V(t+t_0,x)]$$
    $$=V(t_1,x_0)^{-1}[ V(t_1,x_0)-V(t_0,x_0)]
    =1_{\A},$$
    and we have shown the joint continuity of $\Gamma_t(x)$ at $(0,x_0)$.
\end{proof}

In contrast with joint continuity, we now give examples showing that in infinite-dimensional spaces, even for semicocycles over linear  semigroups and the algebra $\A=\C$, uniform types of continuity (UJC and $T$-continuity) do {\it{not}} always hold.

\begin{example}\label{examp12+}
Let  $X=\ell^1$, the space of absolutely convergent sequences. So, any vector $x\in X$ can be
written as $x=\{x_k\}_{k\ge1}$ with $\|x\|:=\sum\limits_{k=1}^\infty |x_k|<\infty$.
Consider the linear semigroup $\mathcal{F}=\{F_t\}_{t\ge0}$ on the open unit ball $\DD$ of $X$ defined
by $F_t(x)=\{e^{-kt}x_k\}_{k\ge1}$. Denote $\Gamma_t(x)=\exp \left[\gamma_t(x)\right]$, where $\gamma_t(x)=\sum\limits_{k=1}^\infty x_k(1-e^{kt})$.
Let verify that $\{\Gamma_t\}_{t\ge0}$ is a semicocycle over $\mathcal{F}$. Indeed, this family is continuous, $\Gamma_0(x)=1$ for every $x\in X$, and
\begin{eqnarray*}
&&\Gamma_t(F_s(x))\Gamma_s(x) =\exp\left(\gamma_t(F_s(x)) +\gamma_s(x) \right)\\
&=& \exp\left[ \sum_{k=1}^\infty \left( e^{-ks}x_k\left(1-e^{-kt}\right) + x_k  \left( 1- e^{-ks}
\right) \right) \right] \\
&=& \exp\left[ \sum_{k=1}^\infty x_k \left( 1-  e^{-kt-ks} \right)\right] = \Gamma_{s+t}(x),
\end{eqnarray*}
that is, the chain rule is satisfied. So, this family forms a semicocycle. Take any point $x^{(0)}\in\DD$ and the points $x^{(n)}$ such that the sequence $x^{(n)}-x^{(0)}$ has only one coordinate different from zero, namely, ${x^{(n)}}_n - {x^{(0)}}_n=\epsilon>0$, so that all of these points are on the $\epsilon$-distance from $x^{(0)}$. Further, $\gamma_t(x^{(n)})=\gamma_t(x^{(0)})+\epsilon(1-e^{-nt})$.  We see that $\gamma_t(x)\to 0$ as $t\to0^+$ but not uniformly in any neighborhood of any point $x^{(0)}$. Therefore the family $\{\gamma_t\}_{t\ge0}$ (hence $\{\Gamma_t\}_{t\ge0}$) is not UJC. At the same time,  $\{\Gamma_t\}_{t\ge0}$ is a JC semicocycle by Theorem~\ref{th-JC}.
\end{example}

\begin{example}\label{examp12}
Let  $X=c_{0}$, the space of all  sequences tending to zero
equipped with the $\sup$-norm. Any vector $x\in X$ can be
written as $x=\{x_k\}_{k\ge1}$ with $\lim\limits_{k\to\infty}x_k=0$. Consider the linear semigroup
$\mathcal{F}=\{F_t\}_{t\ge0}$ on the open unit ball $\DD$ of $X$ defined by $F_t(x)=e^{-t}x$. Obviously, this semigroup is $T$-continuous.
Take $\rho\in(0,1)$ and denote  $\Gamma_t(x)=\exp\left[\gamma_t(x)\right]$, where $\gamma_t(x)=\sum\limits_{k=1}^\infty \left(\frac{x_k}{\rho}\right)^k \left( 1- e^{-tk} \right)$.

First, we check that $\{\Gamma_t\}_{t\ge0}$ is a semicocycle over $\mathcal{F}$. Indeed, this family is continuous and
$\Gamma_0(x)=1$ for every $x\in X$. We verify that the chain rule holds:
\begin{eqnarray*}
&&\Gamma_t(F_s(x))\Gamma_s(x) =\Gamma_t(e^{-s}x)\Gamma_s(x)\\
&=& \exp\left[ \sum_{k=1}^\infty \left( \left(\frac{e^{-s}x_k}
\rho\right)^k \left( 1- e^{-tk} \right) + \left(\frac{x_k}\rho
\right)^k \left( 1- e^{-sk}
\right) \right) \right] \\
&=& \exp\left[ \sum_{k=1}^\infty \left(\frac{x_k}\rho\right)^k
\left( e^{-sk}- e^{-tk-sk} +1- e^{-sk} \right)\right] =
\Gamma_{s+t}(x).
\end{eqnarray*}

Now we show  that $\{\Gamma_t\}_{t\ge0}$ is not uniformly
continuous on~$\{\|x\|\le\rho\}$. To this end, for each
natural $n$, we define a point $x^{(n)}$ with $\|x^{(n)}\|=\rho$~by
\[
{x^{(n)}}_{k}= \left\{
\begin{array}{l}
\rho,\quad 0\leq k\leq n, \vspace{2mm} \\
0,\quad k>n.
\end{array}\right.
\]
Then
\begin{eqnarray*}
\gamma_t(x^{(n)})=  \sum_{k=1}^n
\left(1-e^{-tk}\right) \ge n\left(1-e^{-t}\right).
\end{eqnarray*}
Therefore,
\begin{eqnarray*}
&&\Gamma_t(x^{(n)}) = \exp\left[\sum_{k=1}^n \left(1-e^{-tk}
\right)\right] \ge \exp\left( n\left(1-e^{-t}\right) \right),
\end{eqnarray*}
which tends to $1$ as $t\to0$ for each fixed $n$ but this convergence is not uniform with respect to $n$. Therefore, this semicocycle is not $T$-continuous, while its uniform joint continuity is obvious.
\end{example}

Considering the extended semigroup $\widetilde\Ff$ defined in
Proposition~\ref{th-sg}, the relationship between uniform joint continuity and $T$-continuity of a semicocycle and those of the extended semigroup defined by it follows easily from the definition:

\begin{propo}\label{propo-t-cont}
Let $\left\{\Gamma_t\right\}_{t\ge0}\subset C(\DD, \A)$ be a semicocycle over some semigroup $\Ff\subset C(\DD)$. The semigroup $\mathcal{\widetilde F}$ is uniformly jointly continuous (respectively, $T$-continuous)  if and only if both $\Ff$ and $\left\{\Gamma_t\right\}_{t\ge0}$ are.
\end{propo}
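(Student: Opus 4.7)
The plan is to verify both equivalences directly from the definitions, exploiting the product-structure identity
\[
\widetilde{F}_t(x,y)-\widetilde{F}_{t_0}(x,y)=\bigl(F_t(x)-F_{t_0}(x),\,(\Gamma_t(x)-\Gamma_{t_0}(x))y\bigr),
\]
taking on $X\times Y$ the sum of the component norms, so that the norm of the left side splits as the sum of the norms of the two components. Recall from Proposition~\ref{th-sg} that $\A=L(Y)$ in this setting, so the second component is a bounded operator applied to $y$.

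For the \emph{if} direction I will fix $t_0\ge 0$ and $(x_0,y_0)\in\DD\times Y$. In the UJC case I take a product neighborhood $U\times V$, where $U\ni x_0$ is a common neighborhood on which both $\Ff$ and $\{\Gamma_t\}$ are UJC, and $V\ni y_0$ is a bounded neighborhood with $R:=\sup_{y\in V}\|y\|_Y<\infty$. The first component of the displayed difference goes to zero uniformly on $U\times V$ by UJC of $\Ff$, while the second is dominated by $R\sup_{x\in U}\|\Gamma_t(x)-\Gamma_{t_0}(x)\|_\A$, which also tends to zero. The $T$-continuity case is analogous: any subset $\widetilde\DD^*$ strictly inside $\DD\times Y$ projects onto a set strictly inside $\DD$ (call it $\DD^*$) and is bounded in its second coordinate, so $M:=\sup_{(x,y)\in\widetilde\DD^*}\|y\|_Y<\infty$ and the splitting
\[
\sup_{\widetilde\DD^*}\|\widetilde F_t-\widetilde F_{t_0}\|\le \sup_{\DD^*}\|F_t-F_{t_0}\|_X+M\sup_{\DD^*}\|\Gamma_t-\Gamma_{t_0}\|_\A
\]
finishes the job.

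For the \emph{only if} direction the statement for $\Ff$ is immediate from the first-coordinate projection (which preserves both bounded sets and sets strictly inside $\DD$, since $\partial(\DD\times Y)=\partial\DD\times Y$). The delicate point is recovering operator-norm convergence of $\Gamma_t(x)-\Gamma_{t_0}(x)$ from what $\widetilde\Ff$ hands us, which is only convergence after being applied to vectors $y$. My trick is to center on $y=0$: in the UJC case I apply UJC of $\widetilde\Ff$ at $(t_0,(x_0,0))$ and shrink the resulting neighborhood of $(x_0,0)$ to a product $U\times B_Y(0,r)$; then the identity
\[
\sup_{x\in U,\,\|y\|\le r}\|(\Gamma_t(x)-\Gamma_{t_0}(x))y\|_Y=r\sup_{x\in U}\|\Gamma_t(x)-\Gamma_{t_0}(x)\|_\A
\]
shows the operator norm tends to zero uniformly on $U$. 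For the $T$-continuity case I test on $\widetilde\DD^*:=\DD^*\times\overline{B_Y(0,1)}$; since $\partial(\DD\times Y)=\partial\DD\times Y$, the distance from a point $(x,y)$ of this set to the boundary equals $\mathrm{dist}(x,\partial\DD)$, so $\widetilde\DD^*$ is strictly inside $\DD\times Y$ whenever $\DD^*$ is strictly inside $\DD$, and the same identity upgrades the pointwise-in-$y$ estimate to operator-norm uniform estimate on $\DD^*$.

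The only real obstacle is this last passage from pointwise-in-$y$ to operator-norm convergence, and it is cleanly disposed of by the choice of a neighborhood/test set in $Y$ that contains a ball around the origin.
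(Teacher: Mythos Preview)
Your argument is correct and is exactly the direct verification the paper has in mind: the authors do not give a proof at all, stating only that the equivalence ``follows easily from the definition,'' and your write-up supplies precisely those details via the product-structure decomposition. The one point worth noting is that Proposition~\ref{th-sg} defines $\widetilde{\Ff}$ only for $\A=L(Y)$, so for a general unital Banach algebra $\A$ (as in the rest of the paper) one silently uses the isometric embedding $\A\hookrightarrow L(\A)$ by left multiplication (as in the proof of Theorem~\ref{th-JC}); since $\|1_\A\|=1$ this embedding preserves norms, and your identity $\sup_{\|y\|\le r}\|(\Gamma_t(x)-\Gamma_{t_0}(x))y\|=r\,\|\Gamma_t(x)-\Gamma_{t_0}(x)\|_\A$ then holds with $Y=\A$.
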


One might conjecture that uniform joint continuity (or $T$-continuity) of a semicocycle at ${t_0=0}$ will imply the same property for all $t_0\geq0$. Unfortunately, we do not know whether such a conclusion concerning the $T$-continuity is true in general.  We can show it only under some restrictions which are always satisfied when $X$ is finite-dimensional.

\begin{propo}\label{lem-LUC}
Let $\{\Gamma_t\}_{t\ge0}\subset C(\DD,\A)$ be a semicocycle over
a semigroup $\Ff\subset C(\DD)$.

(i) If the semicocycle is $UJC$ at $t=0$ then it is $UJC$.

(ii) Assume in addition that $\{F_t\}_{t\ge0}$ acts strictly inside $\DD$ and every mapping $\Gamma_t, t\ge0,$ is bounded on sets strictly inside $\DD$. Then if the semicocycle is $T$-continuous at $t=0$, then it is $T$-continuous.
\end{propo}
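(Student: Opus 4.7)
The plan is to derive the semicocycle's continuity at each time $t_0 > 0$ from its continuity at time $0$ by invoking the chain rule in two forms, corresponding to $t > t_0$ and $t < t_0$. For $\sigma > 0$ small I will use
\[
\Gamma_{t_0+\sigma}(x) - \Gamma_{t_0}(x) = \bigl[\Gamma_\sigma(F_{t_0}(x)) - 1_\A\bigr]\,\Gamma_{t_0}(x)
\]
and, after rewriting $\Gamma_{t_0}(x) = \Gamma_\sigma(F_{t_0-\sigma}(x))\,\Gamma_{t_0-\sigma}(x)$,
\[
\Gamma_{t_0-\sigma}(x) - \Gamma_{t_0}(x) = \bigl[\Gamma_\sigma(F_{t_0-\sigma}(x))^{-1} - 1_\A\bigr]\,\Gamma_{t_0}(x).
\]
The second formula becomes legitimate once $\Gamma_\sigma(F_{t_0-\sigma}(x))$ is close enough to $1_\A$ to be invertible, which the Neumann series guarantees as soon as $\|\Gamma_\sigma(F_{t_0-\sigma}(x)) - 1_\A\|_\A < 1$, and then $\|\Gamma_\sigma(F_{t_0-\sigma}(x))^{-1} - 1_\A\|_\A \to 0$ at the same rate.

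For part (i), I would fix $t_0 > 0$, $x_0 \in \DD$, and set $y_0 := F_{t_0}(x_0)$. UJC at $t=0$ applied at $y_0$ furnishes a neighborhood $V$ of $y_0$ on which $\Gamma_\sigma \to 1_\A$ uniformly as $\sigma\to 0^+$. Joint continuity of $\Ff$ at $(t_0,x_0)$, supplied by Theorem~\ref{th-Ch-M-B}, yields $\delta > 0$ and a neighborhood $U$ of $x_0$ with $F_t(U)\subset V$ for all $t \in (t_0-\delta,t_0+\delta)$. Shrinking $U$ if necessary, continuity of $\Gamma_{t_0}$ at $x_0$ keeps $\|\Gamma_{t_0}(\cdot)\|_\A$ bounded on $U$. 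Plugging these inclusions into the two displayed identities yields uniform convergence $\Gamma_t \to \Gamma_{t_0}$ on $U$ as $t\to t_0$.

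For part (ii) I would run the same argument with neighborhoods replaced by sets strictly inside. Given $\DD^*$ strictly inside $\DD$, the hypothesis that $\Ff$ acts strictly inside produces a single set
\[
\DD^{**} := \{F_\tau(x) : x \in \DD^*,\ \tau \in [0, t_0+1]\}
\]
that is strictly inside $\DD$ and contains $F_{t_0}(\DD^*)$ as well as every $F_{t_0-\sigma}(\DD^*)$ for $\sigma \in (0, t_0)$. $T$-continuity at $t=0$ applied to $\DD^{**}$ supplies the uniform decay of $\Gamma_\sigma$ needed on the right-hand sides, and the standing boundedness hypothesis gives $\sup_{x\in\DD^*}\|\Gamma_{t_0}(x)\|_\A < \infty$. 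The same two identities then deliver uniform convergence on $\DD^*$, establishing $T$-continuity at $t_0$.

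The main obstacle I anticipate is the case $t < t_0$, where the point $F_{t_0-\sigma}(x)$ at which the small-time cocycle is evaluated itself moves with $\sigma$. The resolution in both parts is to enclose the entire $\sigma$-parametrized trajectory inside a \emph{fixed} set (the neighborhood $V$ in (i), the strictly-inside set $\DD^{**}$ in (ii)) on which the uniform $\sigma \to 0^+$ convergence of $\Gamma_\sigma$ to $1_\A$ is already available; producing such a $\sigma$-independent enclosing set is exactly where joint continuity of $\Ff$ (in (i)) and the ``acts strictly inside'' hypothesis (in (ii)) enter. Once this geometric containment is secured, the rest reduces to the elementary continuity of the inversion map of $\A$ at $1_\A$.
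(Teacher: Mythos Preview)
Your proposal is correct and follows essentially the same route as the paper: both factor $\Gamma_t(x)-\Gamma_{t_0}(x)$ via the chain rule into a small-time increment $\Gamma_\sigma(\cdot)-1_\A$ (or its inverse) evaluated along the flow, multiplied by $\Gamma_{t_0}(x)$, and then control these two factors separately using the continuity hypothesis at $t=0$ together with continuity/boundedness of $\Gamma_{t_0}$. The only noteworthy difference is in the $t<t_0$ case: you obtain invertibility of $\Gamma_\sigma(F_{t_0-\sigma}(x))$ directly from the Neumann series once it is near $1_\A$, whereas the paper forward-references its Theorem~\ref{th-invert1} on global invertibility; your version is therefore slightly more self-contained, and your explicit use of joint continuity of $\Ff$ (Theorem~\ref{th-Ch-M-B}) to trap the moving point $F_{t_0-\sigma}(x)$ inside a fixed neighborhood $V$ (respectively the fixed strictly-inside set $\DD^{**}$ in (ii)) makes the left-side argument more transparent than the paper's terse ``repeat the previous consideration.''
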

\begin{proof}
	
(i) Assume the semicocycle is UJC at $t=0$.  Fix $t_0>0,x_0\in \DD$.
By the chain rule,
\begin{eqnarray}\label{cr}
\left\| \Gamma_t(x)-\Gamma_{t_0}(x)\right\| = \left\|[\Gamma_{t-t_0}(F_{t_0}(x))-1_\A]\Gamma_{t_0}(x)\right\| \nonumber\\
\leq \| \Gamma_{t-t_0}(F_{t_0}(x))-1_\A \| \| \Gamma_{t_0}(x)\|.
\end{eqnarray}
By the UJC property at $t=0$, there is a neighborhood $U$ of $F_{t_0}(x_0)$ such that $\Gamma_{t-t_0}(x)\rightarrow 1_\A$ as
 $t\rightarrow 0^+$, uniformly for $x\in U$. By continuity of $F_{t_0}$ we can choose a neighborhood $V$ of $x_0$ such that
$F_{t_0}(V)\subset U$. Therefore $\Gamma_{t-t_0}(F_{t_0}(x))\rightarrow 1$ as $t\rightarrow 0^+$, uniformly for $x\in V$.
 By continuity of $\Gamma_{t_0}$, we can assume (perhaps by making $V$ smaller), that $\|\Gamma_{t_0}(x)\|$ is bounded
 in $V$. Therefore (\ref{cr}) implies that $\Gamma_t(x)\rightarrow \Gamma_{t_0}(x)$ as $t\rightarrow 0^+$, uniformly for $x\in V$.
To prove the same for $t\to {t_0}^-$, we use the invertibility of semicocycle values proved in Theorem~\ref{th-invert1} below. Then we write the chain rule in the form
\[
\Gamma_t(x)=\left(\Gamma_{t_0-t}(F_t(x))   \right)^{-1} \Gamma_{t_0}(x)
\]
and repeat the previous consideration.

(ii) Since $\Ff$  acts strictly inside $\DD$, for any set $\DD_1$ strictly inside $\DD$ there is a set $\DD_2$ strictly inside $\DD$ such that $F_{t_0}(x)\in \DD_2$ for all $x\in\DD_1$. Noticing that
$\Gamma_{t-t_0}(\tilde x)$ tends to $1_\A$ as $t\to {t_0}^+$ uniformly
on~$\DD_2,$ we conclude, using (\ref{cr}), that $\Gamma_t(x)\to\Gamma_{t_0}(x)$ as $t\to {t_0}^+$, uniformly on $\DD_1$. The case $t\to{t_0}^-$ can be considered as in the proof of assertion (i).
\end{proof}

We conclude this section by considering invertibility of the values $\Gamma_t(x)$ of semicocycles.
Observe that it follows from condition (b) of
Definition~\ref{semicocycle} that if
$\left\{\Gamma_t\right\}_{t\ge0}$ is a semicocycle, then for every
fixed $x\in\mathcal{D}$ there exists $t_0=t_0(x)$ such that $\Gamma_t(x)$
is invertible whenever $t\le t_0$. It turns out that the joint
continuity proved above implies that {\it all} semicocycle values
are invertible.

\begin{theorem}\label{th-invert1}
Let  $\left\{\Gamma_t\right\}_{t \ge 0}\subset C(\mathcal{D},\A)$ be a
semicocycle. Then each value $\Gamma_t(x),\ t\ge0, x\in\mathcal{D},$ is
invertible.
\end{theorem}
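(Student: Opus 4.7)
My plan is to fix $x \in \DD$ and prove by a connectedness argument that
\[
S := \{t \geq 0 : \Gamma_t(x) \text{ is invertible in } \A\}
\]
coincides with $[0,\infty)$. First note $\Gamma_0(x) = 1_\A$: applying the chain rule with $s = 0$ gives $\Gamma_t(x) = \Gamma_t(x)\Gamma_0(x)$ for every $t$, and passing to the limit $t \to 0^+$ using condition~(b) yields $\Gamma_0(x) = 1_\A$; in particular $0 \in S$. Openness of $S$ in $[0,\infty)$ is straightforward: for $t_0 > 0$ in $S$, the joint continuity of $\Gamma$ on $(0,\infty)\times\DD$ from Theorem~\ref{th-JC}, together with the openness of the group of invertible elements in $\A$, provides a neighborhood of $t_0$ inside $S$; for $t_0 = 0$, condition~(b) yields an initial interval in $S$.

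For closedness suppose $t_n \in S$ with $t_n \to t_*$; we may assume $t_* > 0$ (the case $t_* = 0$ is immediate) and, by passing to a subsequence, that either $t_n \downarrow t_*$ or $t_n \uparrow t_*$. In the first case, the chain rule $\Gamma_{t_n}(x) = \Gamma_{t_n - t_*}(y)\Gamma_{t_*}(x)$ with $y := F_{t_*}(x)$, combined with $\Gamma_{t_n - t_*}(y) \to 1_\A$ from condition~(b), gives $\Gamma_{t_*}(x) = \Gamma_{t_n - t_*}(y)^{-1}\Gamma_{t_n}(x)$ for large $n$, a product of two invertible elements.

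The main obstacle is the case $t_n \uparrow t_*$, since one cannot appeal directly to $\Gamma_{t_* - t_n}(F_{t_n}(x)) \to 1_\A$ without joint continuity of $\Gamma$ at $t = 0$. I would circumvent this by first using condition~(b) at $y := F_{t_*}(x)$ to fix $\delta > 0$ with $\Gamma_\tau(y)$ invertible for all $\tau \in [0,\delta]$; then applying joint continuity of $\Gamma$ at $(\delta/2, y)$ (valid since $\delta/2 > 0$) to obtain a neighborhood $U$ of $y$ on which $\Gamma_{\delta/2}$ is invertible; and finally invoking continuity of the orbit $\sigma \mapsto F_\sigma(x)$ at $\sigma = t_*$ to select $n$ so large that simultaneously $t_* - t_n < \delta/2$ and $F_{t_n}(x) \in U$. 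With such $n$, the chain rule first yields
\[
\Gamma_{t_n + \delta/2}(x) = \Gamma_{\delta/2}(F_{t_n}(x))\Gamma_{t_n}(x),
\]
a product of invertibles; applied a second time from $t_*$, it also gives $\Gamma_{t_n + \delta/2}(x) = \Gamma_\tau(y)\Gamma_{t_*}(x)$ with $\tau := \delta/2 - (t_* - t_n) \in (0, \delta/2]$, and $\Gamma_\tau(y)$ is invertible by the choice of $\delta$. Equating the two expressions yields $\Gamma_{t_*}(x) = \Gamma_\tau(y)^{-1}\Gamma_{t_n + \delta/2}(x)$, which is invertible. Hence $t_* \in S$, so $S$ is closed, and by connectedness of $[0,\infty)$ we conclude $S = [0,\infty)$.
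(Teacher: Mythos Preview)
Your proof is correct and follows essentially the same strategy as the paper's: for each fixed $x$, show by a connectedness-type argument that the set of times $t$ at which $\Gamma_t(x)$ is invertible is all of $[0,\infty)$. The paper phrases this as a supremum/contradiction argument (set $T=\sup\{\tilde T:\Gamma_t(x_0)\in\A_*\text{ for all }t<\tilde T\}$, assume $T<\infty$, and use the chain rule at a time slightly below $T$ to push invertibility beyond $T$); your open-and-closed formulation is equivalent.

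One technical refinement in your version is worth noting. To extend invertibility past $T$, the paper asserts the existence of $\tau>0$ such that $\Gamma_t(z)$ is invertible for all $0\le t<\tau$ and all $z$ in a neighborhood of $F_T(x_0)$; this tacitly uses joint continuity of $\Gamma$ at $t=0$, which by Theorem~\ref{th-JC} is only guaranteed when $\Ff$ itself is jointly continuous on $[0,\infty)\times\DD$. Your handling of the case $t_n\uparrow t_*$ sidesteps this: you first fix $\delta$ using condition~(b) at the single point $y=F_{t_*}(x)$, then rely only on the continuity of the \emph{single} mapping $\Gamma_{\delta/2}$ (and the continuity of the orbit, available since $t_*>0$), routing through the intermediate value $\Gamma_{t_n+\delta/2}(x)$. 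Thus your argument requires only the joint continuity of $\Gamma$ on $(0,\infty)\times\DD$, which always holds, and so covers the general statement without the extra hypothesis on $\Ff$.
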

\begin{proof}
Fix $x_0 \in \mathcal{D}$. By the joint continuity of a semicocycle (Theorem~\ref{th-JC}) and
condition (b) of Definition~\ref{semicocycle}, there exists
$\delta>0$ such that $\Gamma_t(x)$ is invertible for all $0 \le t
<\delta$ and $x\in\mathcal{D}$, such that $\|x-x_0\|<\delta$ . Denote
$T=\sup\left\{\widetilde T: \Gamma_t(x_0)\in\A_*, 0\le
t<\widetilde T \right\}$.

Assume that $T$ is finite. Consider the point $\tilde
x_0=F_T(x_0)$. Since $\Gamma_0(F_T(\cdot))=1_\A$, there exists
$0<\tau$ such that $\Gamma_t(x)$ is invertible for every $0\le t
<\tau$ and $\|x-\tilde x_0\|<\tau$. There exists
$0<\varepsilon<\tau/2$ such that
$$\|F_{T-\varepsilon}(x_0)-\tilde x_0\|=\|F_{T-\varepsilon}(x_0)-F_T(x_0)\|< \tau.$$

By the chain rule we have
\[
\Gamma_{T+\varepsilon}(x_0)=\Gamma_{2\varepsilon}(\tilde x_0)
\Gamma_{T-\varepsilon}(x_0),\quad \tilde x_0=F_{T} (x_0).
\]
Therefore, $\Gamma_{T+\varepsilon}(x_0)\in\A_*$, which contradicts
the finiteness assumption for $T$. Therefore $\Gamma_t(x_0)$ is
invertible for all $t\ge0$. Since $x_0$ is arbitrary,  each
$\Gamma_t(x)\in\A_*$ for all $x\in\mathcal{D}$.
\end{proof}

\begin{remark}\label{lem-identity}
(a) For the case where $\Ff=\{F_t\}_{t\in \R}$ is a group, the above theorem enables to extend a semicocycle $\{\Gamma_t\}_{t \geq 0}$ over $\Ff$ to a cocycle $\{\Gamma_t\}_{t \in \R}$, by setting $\Gamma_{-t}(x)=\left( \Gamma_t (F_{-t}(x)) \right)^{-1}$.

(b) An assertion in a sense `converse' to Theorem~\ref{th-invert1} also holds. Namely, if a family $\left\{\Gamma_t\right\}_{t\ge0}$ satisfies the chain rule with respect to a semigroup  $\mathcal{F}=\{F_t\}_{t\ge0}$ and, for some $t_1>0$, $\Gamma_{t_1}(x)$ is invertible for all $x \in \DD$, then $\Gamma_0(x)=1_\A$ for all
$x\in\mathcal{D}$, so $\left\{\Gamma_t\right\}_{t\ge0}$ is a semicocycle over $\mathcal{F}$. Indeed, applying the chain rule, we have $\Gamma_{t_1}(x)= \Gamma_{t+0}(x)= \Gamma_{t_1}(x)\Gamma_0(x)$  for all $x\in\mathcal{D}$. Then the invertibility of $\Gamma_{t_1}(x)$ implies $\Gamma_0(x)=1_\A$.

(c) For semicocycles consisting of holomorphic mappings, a stronger assertion than (b) holds. If there exist
$x_1\in\mathcal{D}$ and $t_1\ge0$ such that the operator
$\Gamma_{t_1}(x_1)$ is invertible, then $\Gamma_0(x)=1_\A$ for all
$x\in\mathcal{D}$. Indeed, since $\Gamma_{t_1}\in \Hol(\mathcal{D},\A)$, there exists a neighborhood $U$ of $x_1$ such that the operator $\Gamma_{t_1}(x)$ is
invertible for all $x \in U$. As above, $\Gamma_0(x)=1_\A$ whenever $x\in U$. The assertion follows by the uniqueness theorem for holomorphic
mappings.
\end{remark}

\section{Generation of semicocycles}\label{sect-diff}
\setcounter{equation}{0}

In this section we prove that every semicocycle $\{\Gamma_t\}_{t\ge0} \subset C^1(\DD,\A)$ satisfying some mild assumptions is
differentiable with respect to $t$ and can be reproduced as the
unique solution to an evolution problem.

Since both semigroup elements and semicocycle elements depend on
the time parameter $t\ge0$ and the spatial variable $x\in X$, we distinguish the notations
for derivatives. Namely, we denote the derivative with respect to
the parameter $t$ by $\frac{d} {d t}$ while the
Frech\'{e}t derivative with respect to $x$ by ${F_t}'(x),\
{\Gamma_t}'(x)$ and so on.

We begin by recalling
some basic results about non-autonomous evolution problems (see, for example, \cite{Kr, Pa})
\begin{equation}\label{cauchy_1}
\left\{
\begin{array}{l}
\displaystyle \frac{d v(t)}{d t} =a(t)v(t) \vspace{2mm} \\
v(0)=1_\A,
\end{array}%
\right.
\end{equation}
where $a,v:\R^{+}\to \A$ ($\A$, as above, is a unital Banach algebra).

\begin{theorem}\label{th-sol-cauchy_1}
Let $a\in C(\R^{+}, \A)$. Then
\begin{itemize}
\item[(i)] the evolution problem~\eqref{cauchy_1} is equivalent to
the integral equation
\begin{equation}\label{integral-form}
v(t) =  1_\A + \int_0^t a(s)v(s) ds;
\end{equation}

\item[(ii)] the evolution problem~\eqref{cauchy_1} (hence, the
integral equation~\eqref{integral-form}) has a unique solution
$u\in C(\R^{+},\A)$. Moreover,
\begin{equation}\label{estim-u}
\|u(t)\|_\A\le \exp \left(\int_0^t \mu\left(a(s)\right)ds \right),
\end{equation}
where $\mu(a)=\lim \limits_{t \to
0^+}\displaystyle\frac{\|1_\A+ta\|_\A-1}{t}.$
\end{itemize}
\end{theorem}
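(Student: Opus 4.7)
My plan is to handle parts (i) and (ii) separately, using Picard iteration for existence and uniqueness, and a logarithmic-norm argument based on the upper Dini derivative of $\|u(\cdot)\|_\A$ for the growth estimate \eqref{estim-u}.

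For (i), if $v\in C^1(\R^+,\A)$ solves \eqref{cauchy_1}, then integrating from $0$ to $t$ immediately yields \eqref{integral-form}. Conversely, if $v\in C(\R^+,\A)$ satisfies \eqref{integral-form}, then $s\mapsto a(s)v(s)$ is continuous, so the right-hand side of \eqref{integral-form} is of class $C^1$; differentiating recovers the differential equation, and setting $t=0$ gives the initial condition $v(0)=1_\A$.

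For existence in (ii), I fix $T>0$, set $M_T=\sup_{s\in[0,T]}\|a(s)\|_\A<\infty$ by continuity of $a$, and define Picard iterates $v_0(t)\equiv 1_\A$ and $v_{n+1}(t)=1_\A+\int_0^t a(s)v_n(s)\,ds$. A standard induction gives $\|v_{n+1}(t)-v_n(t)\|_\A\le (M_Tt)^{n+1}/(n+1)!$ on $[0,T]$, so $\{v_n\}$ converges uniformly on $[0,T]$ to a continuous function satisfying \eqref{integral-form}. Uniqueness on $[0,T]$ follows from a standard Gronwall argument: if $u_1,u_2$ both satisfy \eqref{integral-form}, then $w=u_1-u_2$ obeys $\|w(t)\|_\A\le M_T\int_0^t\|w(s)\|_\A\,ds$, whence $w\equiv 0$. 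Since $T$ is arbitrary, a unique global solution $u\in C(\R^+,\A)$ is obtained by patching.

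For the bound \eqref{estim-u}, the key is the logarithmic norm. By convexity of $t\mapsto\|1_\A+ta\|_\A$, the Newton quotient $(\|1_\A+ta\|_\A-1)/t$ is nondecreasing on $(0,\infty)$, so $\mu(a)=\inf_{t>0}(\|1_\A+ta\|_\A-1)/t$ is well-defined and depends upper semicontinuously on $a\in\A$; hence $s\mapsto\mu(a(s))$ is locally bounded and Lebesgue-measurable, and $\int_0^t\mu(a(s))\,ds$ makes sense. Using \eqref{cauchy_1} in the form $u(t+\tau)=(1_\A+\tau a(t))u(t)+o(\tau)$ as $\tau\to 0^+$, I obtain
\[
\|u(t+\tau)\|_\A\le \|1_\A+\tau a(t)\|_\A\cdot\|u(t)\|_\A+o(\tau),
\]
so that the upper right Dini derivative satisfies $D^+\|u(t)\|_\A\le\mu(a(t))\|u(t)\|_\A$. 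A Gronwall-type lemma for Dini derivatives, combined with $\|u(0)\|_\A=1$, then yields \eqref{estim-u}.

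The main obstacle is the final step: because $\|u(\cdot)\|_\A$ need not be differentiable, one must invoke Gronwall for the upper Dini derivative rather than for a genuine derivative. The cleanest way is to compare $\|u(t)\|_\A$ with the $C^1$ solution of the perturbed equality $\phi_\varepsilon'(t)=(\mu(a(t))+\varepsilon)\phi_\varepsilon(t)$ on intervals where $\mu\circ a$ is approximated from above by a continuous function (using upper semicontinuity), and then pass to the limit $\varepsilon\to 0$.
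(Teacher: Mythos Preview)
The paper does not actually prove this theorem: immediately after the statement it simply cites standard references (Massera--Sch\"affer and Daleckij--Krein for (i) and for existence/uniqueness in (ii), and Almkvist 1964 for the estimate \eqref{estim-u}). Your proposal, by contrast, supplies a complete self-contained argument, so there is nothing to compare at the level of strategy --- you are filling in what the authors outsourced.

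Your argument is correct. Part (i) is immediate, and the Picard/Gronwall treatment of existence and uniqueness in (ii) is the standard one. For the growth bound, your derivation of $D^+\|u(t)\|_\A\le\mu(a(t))\,\|u(t)\|_\A$ from the first-order expansion $u(t+\tau)=(1_\A+\tau a(t))u(t)+o(\tau)$ is exactly the Lozinskii--Dahlquist logarithmic-norm computation that underlies Almkvist's result. The only genuinely delicate point, which you correctly flag, is integrating the Dini-derivative inequality when $s\mapsto\mu(a(s))$ is merely upper semicontinuous rather than continuous. Your proposed fix (approximate from above by continuous functions and let $\varepsilon\to0$) works; a slightly cleaner alternative is to note that $\Phi(t)=\int_0^t\mu(a(s))\,ds$ is Lipschitz (since $|\mu(a(s))|\le\|a(s)\|_\A$ is locally bounded), so $\psi(t)=\|u(t)\|_\A\,e^{-\Phi(t)-\varepsilon t}$ is continuous, and one checks directly that $D^+\psi(t)\le0$ at every $t$ (using that $\limsup_{\tau\to0^+}(\Phi(t+\tau)-\Phi(t))/\tau\ge\mu(a(t))$ by upper semicontinuity of $\mu\circ a$), whence $\psi$ is nonincreasing; then let $\varepsilon\downarrow0$. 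Either route closes the argument.
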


In particular, assertion (i) and the existence and uniqueness statement in (ii) can be found in various monographs (see, for
example, \cite{M-S, Kr}), while estimate \eqref{estim-u} was proven in~\cite{AG1964}.

Our specific interest is in the special case of \eqref{cauchy_1}
in which $a(t)=B(F_t(x))$, where $\mathcal{F}=\{F_t\}_{t\ge0}
\subset C(\mathcal{D})$ is a semigroup and $B\in C(\mathcal{D},\A)$. In
this case it is natural to replace the function $v(t)$ above by a
mapping ${ v(t,x) : \R^+ \times \mathcal{D} \to \A.}$
By~Theorem~\ref{th-sol-cauchy_1}, the corresponding evolution
problem has a unique solution $u(t):= u(t,x)$ for every fixed
$x\in\mathcal{D}$. It turns out that in this special case the
solution is a semicocycle over the semigroup $\mathcal{F}$.

\begin{theorem}\label{th-sol-cauchy}
Let $B\in C(\mathcal{D},\A)$, and let $\mathcal{F}= \{F_t\}_{t\ge0}
\subset C(\mathcal{D})$ be a semigroup. Denote
by $u(t,x),\ t\ge0,\ x\in \mathcal{D},$ the unique solution to the evolution
problem
\begin{equation}\label{cauchy}
\left\{
\begin{array}{l}
\displaystyle \frac{d v(t,x)}{d t} =B(F_t(x))v(t,x) \vspace{2mm} \\
v(0,x)=1_\A.
\end{array}%
\right.
\end{equation}
Then the family $\{\Gamma_t(x):=u(t,x)\}_{t\ge0}$ is a semicocycle
over $\Ff$.
\end{theorem}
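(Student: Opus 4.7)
My plan is to verify the two axioms of Definition~\ref{semicocycle} directly from the evolution problem \eqref{cauchy}, invoking the uniqueness part of Theorem~\ref{th-sol-cauchy_1}. Property (b) is essentially immediate: for each fixed $x \in \DD$ the solution $u(\cdot, x)$ belongs to $C(\R^{+}, \A)$ and satisfies $u(0,x) = 1_\A$, so $\lim_{t \to 0^+}\Gamma_t(x) = 1_\A$. The bulk of the work lies in establishing the chain rule.

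Fix $s \ge 0$ and $x \in \DD$, and introduce the two mappings
\[
\phi(t) := u(t+s, x), \qquad \psi(t) := u(t, F_s(x))\, u(s, x).
\]
A direct differentiation, using \eqref{cauchy} together with the semigroup identity $F_t \circ F_s = F_{t+s}$, shows that both satisfy the same linear equation $w'(t) = B(F_{t+s}(x))\, w(t)$ on $[0, \infty)$ and share the common initial value $w(0) = u(s,x)$. Equivalently, both $\phi$ and $\psi$ solve the integral equation
\[
w(t) = u(s,x) + \int_0^t B(F_{r+s}(x))\, w(r)\, dr.
\]

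The only real obstacle is that Theorem~\ref{th-sol-cauchy_1} asserts uniqueness only for the specific initial value $1_\A$, whereas here the common initial value is $u(s,x)$. To bypass this I would pass to the difference $h := \phi - \psi$, which satisfies the homogeneous integral equation $h(t) = \int_0^t B(F_{r+s}(x))\, h(r)\, dr$ with $h(0) = 0$. Since $r \mapsto F_{r+s}(x)$ is continuous on $[0, \infty)$ (by Definition~\ref{def-sg-hol}(ii) for the value $r=0$ when $s=0$, and by Theorem~\ref{th-Ch-M-B} otherwise), the function $r \mapsto \|B(F_{r+s}(x))\|_\A$ is bounded on every compact subinterval, and a standard Gr\"onwall iteration forces $h \equiv 0$. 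This gives $\phi \equiv \psi$, i.e. $\Gamma_{t+s}(x) = \Gamma_t(F_s(x))\, \Gamma_s(x)$, which is exactly the chain rule and completes the verification that $\{\Gamma_t\}_{t\ge 0}$ is a semicocycle over $\Ff$.
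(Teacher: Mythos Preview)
Your argument is correct and is essentially the standard one: the paper does not spell out the proof here but refers to Theorem~2.2(a) of \cite{EJK}, where the chain rule is obtained by exactly this uniqueness argument for the linear evolution equation. Your handling of the initial-value mismatch via the homogeneous Gr\"onwall estimate on the difference $h=\phi-\psi$ is clean and avoids the (slightly circular) alternative of first proving that $u(s,x)$ is invertible.
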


The proof is just a repetition of the proof of Theorem~ 2.2(a) in
\cite{EJK}.

\begin{corol}\label{cor-gen}
Let $f\in C(\mathcal{D},X)$ be a semigroup generator and $B\in
C(\mathcal{D}, \A)$. Then the mapping $\widetilde f$ defined by
$\widetilde f(x,a)=\left(f(x),B(x)a \right)$ generates a semigroup
$\Phi$ on $\mathcal{D}\times \A$. A point $(x_0,a_0)$ is
a stationary point of $\Phi$ if and only if $x_0\in\Null f$ and
$a_0\in \Ker B(x_0)$.
\end{corol}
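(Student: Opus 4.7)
The construction of $\Phi$ and the identification of its generator follow by chaining together three results from the text. First, since $f$ generates a semigroup, we have a semigroup $\mathcal{F}=\{F_t\}_{t\ge0}\subset C(\mathcal{D})$ with $f(x)=\lim_{t\to 0^+}\tfrac{1}{t}[F_t(x)-x]$. Feeding $(\mathcal{F},B)$ into Theorem~\ref{th-sol-cauchy}, I obtain a semicocycle $\{\Gamma_t\}_{t\ge0}\subset C(\mathcal{D},\mathcal{A})$ satisfying the evolution problem \eqref{cauchy}. Proposition~\ref{th-sg} (applied with $\mathcal{A}$ playing the role of $Y$, viewing each element of $\mathcal{A}$ as a left-multiplication operator) then gives that
\[
\Phi_t(x,a):=\widetilde{F}_t(x,a)=\bigl(F_t(x),\,\Gamma_t(x)a\bigr)
\]
is a one-parameter continuous semigroup on $\mathcal{D}\times\mathcal{A}$.

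Next I identify the generator. On the first coordinate, $\tfrac{1}{t}[F_t(x)-x]\to f(x)$ by assumption. On the second, I use the integral form \eqref{integral-form} of \eqref{cauchy}, namely $\Gamma_t(x)=1_{\mathcal{A}}+\int_0^t B(F_s(x))\Gamma_s(x)\,ds$, to write
\[
\frac{1}{t}\bigl[\Gamma_t(x)a-a\bigr]=\left(\frac{1}{t}\int_0^t B(F_s(x))\Gamma_s(x)\,ds\right)a.
\]
The integrand is continuous in $s$ at $s=0$ (by continuity of $F_s(x)$ in $s$, continuity of $B$, and $\Gamma_0(x)=1_{\mathcal{A}}$), so the average converges to $B(x)\cdot 1_{\mathcal{A}}=B(x)$ as $t\to 0^+$. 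Combining coordinates gives $\lim_{t\to 0^+}\tfrac{1}{t}[\Phi_t(x,a)-(x,a)]=(f(x),B(x)a)=\widetilde f(x,a)$, so $\widetilde f$ is the generator of $\Phi$.

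For the characterization of stationary points, a stationary point is by definition a common fixed point of all $\Phi_t$. If $(x_0,a_0)$ is stationary, then in particular $F_t(x_0)=x_0$ for all $t\ge 0$, so by differentiating at $t=0$ we get $f(x_0)=0$, i.e.\ $x_0\in\Null f$. Then \eqref{cauchy} reduces to the autonomous problem $\tfrac{d}{dt}\Gamma_t(x_0)=B(x_0)\Gamma_t(x_0)$ with $\Gamma_0(x_0)=1_{\mathcal{A}}$, whose unique solution is $\Gamma_t(x_0)=\exp(tB(x_0))$. The fixed-point condition $\Gamma_t(x_0)a_0=a_0$ for all $t\ge 0$, differentiated at $t=0$, yields $B(x_0)a_0=0$, i.e.\ $a_0\in\Ker B(x_0)$. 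The converse is immediate: if $x_0\in\Null f$ then $F_t(x_0)=x_0$ for all $t$, and if moreover $B(x_0)a_0=0$ then $\exp(tB(x_0))a_0=a_0$, so $\Phi_t(x_0,a_0)=(x_0,a_0)$.

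The only subtlety I anticipate is notational/conceptual rather than mathematical: one must be careful that the product structure of $\mathcal{D}\times\mathcal{A}$ together with Proposition~\ref{th-sg} really does entitle us to compute the generator coordinatewise, and that differentiability of the second coordinate at $t=0$ is supplied by Theorem~\ref{th-sol-cauchy_1}(i) in integral form rather than requiring the full evolution equation at a fixed interior $t>0$. Once this is made explicit the rest of the argument is essentially bookkeeping.
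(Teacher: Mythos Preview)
Your proof is correct and follows the same route as the paper's one-sentence argument: invoke Theorem~\ref{th-sol-cauchy} to produce the semicocycle $\{\Gamma_t\}$ from $B$ and $\mathcal{F}$, then apply Proposition~\ref{th-sg} to obtain the extended semigroup $\Phi=\widetilde{\mathcal{F}}$. You go further than the paper by explicitly computing the generator of $\Phi$ via the integral form~\eqref{integral-form} and by spelling out the stationary-point characterization, whereas the paper simply asserts that the proof ``follows automatically'' from these two results.
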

In fact, the generated semigroup $\Phi$ in this corollary coincides with the extended semigroup (linear skew-product flow) $\widetilde\Ff$
described in Proposition~\ref{th-sg}. The proof follows automatically from Proposition~\ref{th-sg} and Theorem~\ref{th-sol-cauchy}.

\begin{example}\label{examp12a}
As in Example~\ref{examp12}, consider the space
${X=c_0}$, the linear semigroup $\mathcal{F}=\{e^{-t}\cdot\}_{t\ge0}$
and the semicocycle defined by
\[
\Gamma_t(x)=\exp\left[ \sum_{k=1}^\infty
\left(\frac{x_k}{\rho}\right)^k \left( 1- e^{-tk} \right)\right].
\]
It is easy to see that this  semicocycle is differentiable with respect to $t$ in spite of the fact that it is not $T$-continuous. Further, consider  the extended semigroup
$\widetilde{F}_t(x,a)=\left(F_t(x),\Gamma_t (x) a \right)$, which
acts on the domain $\DD\times\A$. Differentiating this semigroup,
we see that it is generated by the mapping
\[
\widetilde{f}(x,a)=\left(-x, \left(\sum_{k=1}^\infty k
\left(\frac{x_k}{\rho}\right)^k \right)a \right).
\]
Notice that by Proposition~\ref{propo-t-cont}, the extended semigroup is not $T$-continuous.  This example shows that on unbounded domains a semigroup of holomorphic mappings may be generated but not $T$-continuous (in contrast with the case of bounded domains; cf. Theorem~\ref{th-RS}).
\end{example}

The result of Theorem~\ref{th-sol-cauchy} naturally raises the
question as to whether {\it{every}} semicocycle is differentiable with respect to $t$ and can be produced
as the solution of a corresponding evolution problem. In general, the answer is negative, what follows from Example~\ref{examp12+}. In fact, even in the one-dimensional case there are non-differentiable semicocycles.

\begin{example}\label{examp12b}
  Consider the linear semigroup
$\Ff=\{e^{-t}\cdot\}_{t\ge0}$ on the interval $(-1,1)$. Denote
\[
\Gamma_t(x)=\left\{
\begin{array}{l}
e^{-t}, \qquad \qquad\mbox{if} \quad |x|<\frac12, \vspace{2mm} \\
2|x|e^{-t},\qquad\  \mbox{if} \quad \frac12\le |x|<\min\left \{1,\frac12 e^{t}\right\}, \vspace{2mm} \\
1\qquad \qquad\quad\mbox{otherwise}.
\end{array}
\right.
\]
A direct calculation shows that $\{\Gamma_t,\ t\ge0\}$ is a (real-valued) semicocycle over
$\mathcal{F}$. For every $x,\ |x|>\frac12$,  this semicocycle is not differentiable with respect to $t$ at the point $t=\ln(2|x|)$.
\end{example}

In Theorem~\ref{th-differentiability} below we establish sufficient conditions for the differentiability of smooth semicocycles. This assertion generalizes Theorem~2.2(b) in \cite{EJK} to infinite-dimensional spaces.

To obtain this result, we use the
mapping ${V:[0,\infty)\times \mathcal{D}\rightarrow \A}$ defined by
\begin{equation}\label{V-func}
V(t,x)=\int_{0}^t \Gamma_s(x)ds,
\end{equation}
which is similar to one used in the proof of Theorem~\ref{th-JC}. The properties of $V$ that will be needed below are given by

\begin{lemma}\label{lem-V}
Let $\{\Gamma_t\}_{t\ge0}\subset C(\mathcal{D},\A)$ be a
semicocycle over $\Ff.$ Then the following assertions hold:
\begin{itemize}
\item[(i)] If $\{\Gamma_t\}_{t\ge0}$ is jointly continuous on
$[0,\infty)\times \DD$ then the family $\{V(t,\cdot)\}_{t\geq 0}$
is also jointly continuous on $[0,\infty)\times \DD$.

\item[(ii)] For every fixed $x\in \mathcal{D}$, $t\mapsto V(t,x)$
is differentiable in $[0,\infty)$  and
\begin{equation}\label{dV}
\frac{d V(t,x)}{d t}=\Gamma_t(x);
\end{equation}

\item[(iii)] For every fixed $x_0\in \mathcal{D}$, there exists $
t_0 >0$ such that $V(t,x_0)$ is invertible for every $t \in (0,
t_0 ]$;

\item [(iv)] For all fixed $x\in \mathcal{D}$ and $t,s \in
[0,\infty)$, the following identity holds:
\begin{equation}\label{id-V}
V(s,F_t(x))\Gamma_t(x)=V(t+s,x)-V(t,x);
\end{equation}
\end{itemize}
\end{lemma}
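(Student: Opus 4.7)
The four assertions are essentially independent, and I would tackle them in the order (ii), (iv), (iii), (i), since (ii) and (iv) are purely formal consequences of the definitions while (iii) and (i) require a small amount of analysis.

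Assertion (ii) follows from the fundamental theorem of calculus for Banach-algebra-valued continuous functions: since $\{\Gamma_t\}$ is a semicocycle, for each fixed $x\in\mathcal{D}$ the map $s\mapsto\Gamma_s(x)$ lies in $C([0,\infty),\mathcal{A})$, so $t\mapsto V(t,x)$ is $C^1$ on $[0,\infty)$ with derivative $\Gamma_t(x)$. For assertion (iv), I would apply the chain rule $\Gamma_r(F_t(x))\Gamma_t(x)=\Gamma_{t+r}(x)$ under the integral defining $V(s,F_t(x))$. Since $\Gamma_t(x)$ does not depend on the integration variable $r$, it can be pulled inside the Bochner integral, giving $\int_0^s\Gamma_{t+r}(x)\,dr$; a change of variable $u=t+r$ and splitting $\int_0^{t+s}-\int_0^t$ then yields the identity~\eqref{id-V}.

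For assertion (iii), write $V(t,x_0)=t\cdot\bigl[\tfrac{1}{t}\int_0^t\Gamma_s(x_0)\,ds\bigr]$. By condition (b) of Definition~\ref{semicocycle}, $\Gamma_s(x_0)\to 1_{\mathcal{A}}$ as $s\to 0^+$, so the bracketed averaged quantity converges to $1_{\mathcal{A}}$. Since the set of invertible elements is open in $\mathcal{A}$, there exists $t_0>0$ such that $\tfrac{1}{t}V(t,x_0)$ lies in $\mathcal{A}_*$ for every $t\in(0,t_0]$; multiplication by the nonzero scalar $t$ preserves invertibility, so $V(t,x_0)$ itself is invertible on $(0,t_0]$.

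Assertion (i) is where the only real work lies. To prove joint continuity of $V$ at $(t_0,x_0)$, I would use the decomposition
\begin{equation*}
V(t,x)-V(t_0,x_0)=\int_{t_0}^{t}\Gamma_s(x)\,ds+\int_{0}^{t_0}\bigl[\Gamma_s(x)-\Gamma_s(x_0)\bigr]\,ds.
\end{equation*}
Joint continuity of $\Gamma$ on $[0,\infty)\times\mathcal{D}$ yields local boundedness of $\|\Gamma_s(x)\|$ on a small tube around $[0,t_0]\times\{x_0\}$, which controls the first summand by a factor of $|t-t_0|$. The second summand requires a standard tube argument: for each $s\in[0,t_0]$, joint continuity at $(s,x_0)$ provides a product neighborhood on which $\|\Gamma_r(x)-\Gamma_s(x_0)\|<\varepsilon/3$; compactness of $[0,t_0]$ extracts a finite subcover whose intersection of spatial factors is a neighborhood $U$ of $x_0$ on which $\|\Gamma_s(x)-\Gamma_s(x_0)\|<\varepsilon$ uniformly in $s\in[0,t_0]$. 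This uniform convergence is the main technical point of the lemma; once it is in hand, integration over $[0,t_0]$ finishes the proof.
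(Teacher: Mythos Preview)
Your proposal is correct and matches the paper's proof essentially line for line: the paper dismisses (i) and (ii) as following ``immediately'', proves (iii) via $\lim_{t\to 0^+}\frac{1}{t}V(t,x_0)=\Gamma_0(x_0)=1_\A$ and openness of $\A_*$, and proves (iv) by the same chain-rule computation and substitution you describe. Your explicit tube/compactness argument for (i) is the standard way to unpack what the paper leaves implicit, so there is no genuine difference in approach.
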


Concerning assertion (i), note that by Theorem~\ref{th-JC} every
semicocycle over a jointly continuous semigroup is jointly
continuous itself.

\begin{proof}
Assertions (i) and (ii) follow immediately.

Fix some $x_0\in \mathcal{D}$. By assertion (ii) we have
$$\lim_{t\to 0^+} \frac{1}{t}V(t,x_0)=\Gamma_0(x_0)=1_\mathcal{A}.$$
This means that if we choose $ t_0 >0$ sufficiently small,
$\frac{1}{t}V(t,x_0)$ is invertible, and hence $V(t,x_0)$ is also
invertible for all $0< t< t_0 $. Therefore (iii) holds.

Assertion (iv) follows from the chain rule. Indeed,
    for all $t,s \in [0,\infty)$,
    \begin{eqnarray*}
    &&V(s,F_t(x))\Gamma_t(x)=\int_0^{s} \Gamma_r(F_t(x))
    \Gamma_t(x)dr  \nonumber \\
    &=&\int_0^{s} \Gamma_{t+r}(x)dr =\int_{t}^{t+s}
    \Gamma_{r}(x)dr
    \nonumber  \\
    &=&V(s+t,x)-V(t,x).
    \end{eqnarray*}
This completes the proof.
\end{proof}

We now present the main results of this section for semicocycles
consisting of smooth mappings.

\begin{theorem}\label{th-differentiability}
       Let a semigroup $\Ff=\left\{F_t\right\}_{t\ge0}\subset C(\DD)$ be generated by $f \in C(\DD, X)$. Assume that $\{\Gamma_t\}_{t \geq 0}\subset C^1(\DD, \A)$ is a semicocycle over $\Ff$, such that the family $\{{\Gamma_t}' [f] \}_{t\ge0}$ is jointly continuous on $[0,\infty)\times \DD$.
   Then for each $x\in \DD$, the function $t\mapsto \Gamma_t(x)$ is differentiable on $[0,\infty)$. Moreover, defining
\begin{equation}\label{B}
B(x)=\left.\frac{d}{dt}\Gamma_t(x)\right|_{t=0},
\end{equation}
we have ${B\in C(\mathcal{D},\A)}$ and $\Gamma_t(x)$ is the unique
solution to the evolution problem~\eqref{cauchy}.
\end{theorem}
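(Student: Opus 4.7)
The plan is to exploit the auxiliary function $V(t,x)=\int_0^t\Gamma_s(x)\,ds$ from Lemma~\ref{lem-V}. By part~(iii), for each $x_0\in\DD$ there is $s>0$ with $V(s,x_0)$ invertible, and by joint continuity (part (i), combined with Theorem~\ref{th-JC}) this invertibility persists for $V(s,F_t(x_0))$ when $t$ is small. Rearranging identity~\eqref{id-V} then gives
\[
\Gamma_t(x_0)=V(s,F_t(x_0))^{-1}\bigl[V(t+s,x_0)-V(t,x_0)\bigr]
\]
for $t$ in a right-neighborhood of $0$. The strategy is to differentiate both factors at $t=0$ to obtain an explicit formula for $B(x_0)$, deduce continuity of $B$ from that formula, and then propagate differentiability to all $t\geq 0$ via the semicocycle chain rule.

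For the bracket, Lemma~\ref{lem-V}(ii) delivers the right-derivative $\Gamma_s(x_0)-1_\A$ at $t=0$. For $V(s,F_t(x_0))=\int_0^s\Gamma_r(F_t(x_0))\,dr$, the crucial step uses that $u\mapsto F_u(x_0)$ is differentiable with derivative $f(F_u(x_0))$ (because $f$ generates $\Ff$) together with $\Gamma_r\in C^1$: the chain rule yields
\[
\Gamma_r(F_t(x_0))-\Gamma_r(x_0)=\int_0^t(\Gamma_r'[f])(F_u(x_0))\,du,
\]
so by Fubini $V(s,F_t(x_0))-V(s,x_0)=\int_0^t\!\int_0^s(\Gamma_r'[f])(F_u(x_0))\,dr\,du$. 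The joint continuity of $\Gamma_r'[f]$ on $[0,\infty)\times\DD$ makes the inner integral continuous in $u$, giving right-derivative $\int_0^s(\Gamma_r'[f])(x_0)\,dr$ at $t=0$. The product rule then produces
\[
B(x_0)=V(s,x_0)^{-1}\!\left[\Gamma_s(x_0)-1_\A-\int_0^s(\Gamma_r'[f])(x_0)\,dr\right],
\]
and each ingredient---openness of the group of units, continuity of inversion on it, and continuity in $x_0$ of the integral via joint continuity---delivers $B\in C(\DD,\A)$.

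For $t_0>0$, the chain rule $\Gamma_{t_0+h}(x)=\Gamma_h(F_{t_0}(x))\Gamma_{t_0}(x)$ together with the right-differentiability at $0$ applied at $F_{t_0}(x)$ shows that the right derivative of $t\mapsto\Gamma_t(x)$ at $t_0$ exists and equals $B(F_{t_0}(x))\Gamma_{t_0}(x)$. Since this expression is continuous in $t_0$, the standard fact that a continuous function on an interval whose right derivative exists and is continuous is in fact $C^1$ makes $t\mapsto\Gamma_t(x)$ continuously differentiable with derivative $B(F_t(x))\Gamma_t(x)$, i.e.\ a solution of~\eqref{cauchy}; uniqueness is Theorem~\ref{th-sol-cauchy_1}(ii).

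The main obstacle is differentiating $V(s,F_t(x_0))$ under the sole hypothesis of joint continuity of $\Gamma_r'[f]$ (rather than of $\Gamma_r'$ itself). The identity $\Gamma_r(F_t(x_0))-\Gamma_r(x_0)=\int_0^t(\Gamma_r'[f])(F_u(x_0))\,du$ is what bridges this gap, converting the problem into a double integral on which Fubini and the continuity hypothesis act directly.
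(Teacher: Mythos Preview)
Your proof is correct and follows essentially the same route as the paper: both use the auxiliary function $V(t,x)=\int_0^t\Gamma_s(x)\,ds$, the identity~\eqref{id-V}, and the rearrangement $\Gamma_t(x_0)=V(s,F_t(x_0))^{-1}[V(t+s,x_0)-V(t,x_0)]$ to obtain differentiability and the explicit formula~\eqref{B1} for $B$. Your only deviations are cosmetic---you justify differentiation of $V(s,F_t(x_0))$ via the integral chain rule plus Fubini rather than Leibniz's rule, and you propagate to all $t$ by computing the right derivative everywhere and invoking the ``continuous right derivative $\Rightarrow$ $C^1$'' lemma, whereas the paper first obtains two-sided differentiability on $[0,t_1)$ and then bootstraps using $\Gamma_{t+s}(x_0)=\Gamma_s(F_t(x_0))\Gamma_t(x_0)$ with $\Gamma_s\in C^1$.
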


\begin{proof}
Fix some $x_0\in \mathcal{D}$. Let us show that $t\mapsto
\Gamma_t(x_0)$ is differentiable in $[0,\infty)$. By assertion
(iv) of Lemma~\ref{lem-V}, for any~$t$ we have
$$V(s,F_t(x_0))\Gamma_t(x_0)=V(t+s,x_0)-V(t,x_0).$$
By assertion (iii) of Lemma~\ref{lem-V}, there exists $t_0>0$ such
that $V(t_0,x_0)$ is invertible. Hence $V(t_0,x)$ is invertible
for all $x$ in a neighborhood $U$ of~$x_0$. In particular, there
exists $t_1>0$ such that  $V(t_0,F_t(x_0))$ is invertible for $t<
t_1$. Thus we can write the above as
\[
\Gamma_t(x_0) =V(t_0,F_t(x_0))^{-1}[ V(t+t_0,x_0)-V(t,x_0)].
\]
First we note that $t\mapsto [ V(t+t_0,x_0)-V(t,x_0)]$ is
differentiable for all $t\ge 0$ by (ii) of Lemma~\ref{lem-V}. To
see the differentiability of $t\mapsto V(t_0,F_t(x_0))^{-1}$ for
$t< t_1$, it is enough to verify that $V(t_0,F_t(x_0))=
\int_{0}^{t_0} \Gamma_\tau(F_t(x_0))d\tau$ is differentiable.
Indeed, since the family $\{{\Gamma_t}' [f] \}_{t\ge0}$ is jointly
continuous, differentiation under the integral is valid by
Leibniz's rule (see \cite[8.11.2]{Die}). Moreover,
\begin{equation}\label{*}
\frac{d}{dt}V(t_0,F_t(x_0))= \int_{0}^{t_0} {\Gamma_\tau}'
(F_t(x_0))[f(F_t(x_0))]d\tau.
\end{equation}
We therefore conclude that $t\mapsto \Gamma_t(x_0)$ is
differentiable for $t\in[0,t_1)$.

To show the differentiability for all $t\ge0$, we fix any $s>0$
and write
\[
\Gamma_{t+s}(x_0) =\Gamma_s(F_t(x_0)) \Gamma_t(x_0).
\]
The last factor in the right hand-side is differentiable with
respect to $t<t_1$ while the first one is differentiable for all
$t\ge0$. Therefore we conclude that $t\mapsto \Gamma_t(x_0)$ is
differentiable in $[s,s+t_1)$; and since $s$ is arbitrary, we are
finished.

Now we find an explicit formula for the derivative
$B(x)=\left.\frac{d}{dt}\Gamma_t(x)\right|_{t=0}$. Differentiating
both sides of \eqref{id-V} with respect to $t$ by using \eqref{*}, and then setting
$t=0$, we obtain
\begin{equation}\label{B1}
  B(x)=V(s,x)^{-1}\left(\Gamma_{s}(x)-1_{\A}-      \int_{0}^{s}
{\Gamma_\tau}'(x)[f(x)]d\tau                   \right).
\end{equation}
This implies that $B$ is continuous on~$\mathcal{D}$. Moreover,  for any $t$,
\begin{eqnarray}\label{lim-2g}
&&\frac{d}{d t} \Gamma_t(x) = \lim_{s\to0^+}\frac1s
\left(\Gamma_{t+s}(x)-
\Gamma_t(x)\right) \nonumber \\
&=& \lim_{s\to0^+}\frac1s \left(\Gamma_s(F_t(x))-
1_{\A}\right)\Gamma_t(x) =B(F_t(x))\Gamma_t(x).
\end{eqnarray}
Therefore, for every $x\in \mathcal{D}$, the $\A$-valued mapping
$v(t,x)= \Gamma_t(x)$ is the unique solution of \eqref{cauchy}.
\end{proof}

\begin{remark}\label{rem}
Following the analogy with generation theory for semigroups, it is natural to call the mapping~$B$ defined by \eqref{B} {\it the generator} of the semicocycle $\{\Gamma_t\}_{t\ge0}$.
\end{remark}

Next we assume that a semicocycle $\left\{\Gamma_t\right\}_{t\ge0}\subset C(\DD,\A)$ is
differentiable with respect to $t$ and denote its generator by $B$, which is also assumed to be continuous on $\DD$. In this setting one asks:

{\it How can one estimate the growth of a differentiable semicocycle?}

This  problem is important because such growth estimates provide affirmative answers to questions concerning the stability of evolution problems \eqref{cauchy}. We establish uniform estimates for the growth of the norm $\|\Gamma_t(x)\|_\A$ of a semicocycle with respect to time. Some of these estimates are sharp for small $t$, while others relate to the asymptotic behavior as $t\rightarrow \infty$.

Since $\|\Gamma_t(x)\|_\A\rightarrow 1$ as $t\rightarrow 0$, we are interested in obtaining bounds of the form
\begin{equation}\label{K}
\left\|\Gamma_t\right\|_{\DD^*}:=\sup\limits_{x\in\DD^*}\|\Gamma_t(x)\|_\A
\leq e^{Kt},
\end{equation}
on suitable subsets
$\DD^* \subseteq \mathcal{D}$ and exponents $K$, which depend on $\DD^*$. Being solutions of evolution problems, differentiable semicocycles satisfy estimates of the form \eqref{estim-u}; hence, they are expected to satisfy \eqref{K}. The following assertion justifies this expectation, generalizing Theorem~2.3 in\cite{EJK}. 
We use the quantity $\mu(a), a \in \A$, as defined in assertion (ii) of Theorem~\ref{th-sol-cauchy_1}.

\begin{propo}\label{pro-exp}
    Let $\DD^*\subseteq \DD$ be an invariant set of a semigroup $\mathcal{F}=\{F_t\}_{t\ge0}\subset C(\DD)$.
    A semicocycle $\left\{\Gamma_t\right\}_{t\ge0}$ with generator $B$ satisfies \eqref{K} if and only if its generator $B$ satisfies
    $\mu(B(x))\leq K$ for all
    $x\in\DD^*$.
\end{propo}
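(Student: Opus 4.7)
The plan is to treat the two implications separately; both reduce to tools already established in Section~\ref{sect-diff}. For the sufficiency direction I would invoke the a priori bound \eqref{estim-u} from Theorem~\ref{th-sol-cauchy_1}(ii), while for necessity I would use only the definition \eqref{B} of $B(x)$ as a one-sided derivative together with the triangle inequality.

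For the sufficient direction, suppose $\mu(B(x))\le K$ for all $x\in\DD^*$. I would fix $x\in\DD^*$ and exploit the $\Ff$-invariance of $\DD^*$ to guarantee that $F_s(x)\in\DD^*$ for every $s\ge 0$, hence $\mu(B(F_s(x)))\le K$ uniformly in $s$. By Theorem~\ref{th-sol-cauchy}, $t\mapsto \Gamma_t(x)$ is the unique solution of \eqref{cauchy} driven by $a(s)=B(F_s(x))$, which is continuous in $s$ since the semigroup is separately continuous and $B\in C(\DD,\A)$. The estimate \eqref{estim-u} then yields
$$\|\Gamma_t(x)\|_\A\le \exp\!\left(\int_0^t \mu(B(F_s(x)))\,ds\right)\le e^{Kt},$$
and taking the supremum over $x\in\DD^*$ gives \eqref{K}.

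For the necessary direction, suppose \eqref{K} holds and fix $x\in\DD^*$. From \eqref{B} I would write $\Gamma_t(x)=1_\A+tB(x)+r(t)$ with $\|r(t)\|_\A/t\to 0$ as $t\to 0^+$. The triangle inequality combined with the hypothesized growth bound gives $\|1_\A+tB(x)\|_\A\le \|\Gamma_t(x)\|_\A+\|r(t)\|_\A\le e^{Kt}+\|r(t)\|_\A$, so
$$\frac{\|1_\A+tB(x)\|_\A-1}{t}\le \frac{e^{Kt}-1}{t}+\frac{\|r(t)\|_\A}{t},$$
and passing to the limit $t\to 0^+$ delivers $\mu(B(x))\le K$.

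I do not anticipate a genuine obstacle in this argument. The only point that deserves care is that the invariance of $\DD^*$ must be used in the sufficiency direction to keep the integrand $\mu(B(F_s(x)))$ bounded by $K$ along the \emph{entire} trajectory; without invariance, the orbit could leave $\DD^*$ and the uniform bound would no longer follow from the pointwise hypothesis on $\mu\circ B$.
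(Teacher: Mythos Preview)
Your proposal is correct and follows essentially the same route as the paper: the necessity direction uses the first-order expansion $\Gamma_t(x)=1_\A+tB(x)+o(t)$ together with the bound $\|\Gamma_t(x)\|_\A\le e^{Kt}$, and the sufficiency direction combines the $\Ff$-invariance of $\DD^*$ with the estimate~\eqref{estim-u}. One minor point of attribution: the fact that a differentiable semicocycle solves~\eqref{cauchy} is not the content of Theorem~\ref{th-sol-cauchy} (which goes in the opposite direction) but rather follows directly from the chain rule as in~\eqref{lim-2g}; with that adjustment your argument matches the paper's.
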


\begin{proof}
    Suppose that $\left\{\Gamma_t\right\}_{t\ge0}$ satisfies \eqref{K}. Since $B$ is the derivative of
    $\Gamma_t(x)$ at $t=0$, we have
    \[
    \Gamma_t(x)=1_\A+ tB(x)+o(t) \quad \mbox{ as } t \to 0.
    \]
    Therefore, for $x\in \DD^*$,
    \begin{eqnarray*}
        \mu(B(x))&=&\lim\limits_{t\to 0^+} \frac{1}{t} \left[\left\|1_{\A}+tB(x)
        \right\|_\A -1\right]= \lim\limits_{t\to 0^+} \frac{1}{t} \left[\left\|\Gamma_t(x)
        \right\|_\A -1\right]\\ &\leq& \lim\limits_{t\to 0^+} \frac 1
        t(e^{Kt}-1)=K.
    \end{eqnarray*}

Conversely, let generator $B$ of a semicocycle satisfy $\mu(B(x))\leq K$ for all $x \in\DD^*$.  Then, since $\DD^*$ is $\Ff$-invariant, if $x\in\DD^*$, then  $F_t(x)\in \DD^*$ for all $t\geq 0$, so $\mu\left(B(F_t(x))\right)\leq K$, and the using of Theorem~\ref{th-sol-cauchy_1} completes the proof.
\end{proof}

We remark that there exist semicocycles which satisfy \eqref{K} on every
$\Ff$-invariant domain different from the whole $\DD$ but not
on $\DD$ (see Examples~2.1--2.2 in \cite{EJK}).

To formulate our next result recall that a point $x_0 \in \DD$ is called a fixed point of a semigroup $\mathcal{F} =\left\{F_t\right\}_{t\ge0}$ if $F_t(x_0)=x_0$ for all $t \geq 0$. A fixed point $x_0$ is said to be (globally) {\it{attractive}} if  $F_t(x)\to x_0$ as $t\to\infty$ for all $x\in \DD$. While in the one-dimensional case, for a semigroup of holomorphic self-mappings, which does not consist of elliptic automorphisms, the existence of a unique fixed point implies its attractivity, this is no longer true in higher-dimensional spaces, as shown by the simple example:  $F_t(x_1,x_2)=(e^{-t}x_1, e^{it} x_2)$. Moreover, in an infinite-dimensional space a semigroup may converge to a unique interior point  $x_0\in\DD$ but not uniformly on any neighborhood of $x_0$.

\vspace{2mm}

Notice that an inequality of the form $\|\Gamma_t(x)\|_\A\le Le^{Kt}$ is not sharp for small $t$ whenever $L>1$. On the other hand, it might happen that for large $t$, one can admit a smaller exponent at the expense of taking a larger prefactor $L$. We now present sharp (as $t\to\infty$) estimates for semicocycles over semigroups having a globally attractive fixed point.

The following result shows that the exponential rate of growth of a semicocycle over a semigroup with an attractive fixed point $x_0$ is determined by the rate of growth of the linear semigroup $\Gamma_t(x_0)=e^{tB_0}$, where $B_0=B(x_0)$. To formulate it, we denote the spectrum of an element $A$ of a Banach algebra by $\sigma(A)$, and the Lyapunov index of $A$  (see, for
example, \cite{Kr} or \cite{D-Sch1958}) by
\begin{equation}\label{kappa+}
  \kappa(A):=\max\{\Re \lambda: \lambda\in\sigma(A)\} =
  \limsup_{t\rightarrow \infty}\frac{1}{t} \log\|e^{tA}\|.
\end{equation}

\begin{theorem}\label{th-estim1}
    Let $\Ff=\{F_t\}_{t\ge0}\subset C(\DD)$ be a semigroup having a globally attractive fixed point $x_0\in\mathcal{D}$. Let $\left\{\Gamma_t\right\}_{t\ge0}\subset C(\DD,\A)$  be a semicocycle over $\Ff$ generated by $B \in C(\DD,\A)$.
    Let $\kappa=\kappa(B(x_0))$ be given by~\eqref{kappa+}. Then, for every $x\in \mathcal{D}$,
    $$\limsup_{t\rightarrow \infty} \frac{1}{t}\ln \|\Gamma_t(x)\|_\A \leq \kappa.$$
    Moreover, if the convergence $F_t(x)\to x_0$ is uniform on all sets strictly inside $\mathcal{D}$,
    then for every $\DD^*$ which is strictly inside $\mathcal{D}$ and $\epsilon>0$ there exists an
    $L=L(\mathcal{D}^*,\epsilon)$ such that
    \[
    \|\Gamma_t(x)\|_\A\le L e^{t(\kappa+\epsilon)},\quad x\in\mathcal{D}^*.
    \]
\end{theorem}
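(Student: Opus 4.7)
The starting observation is that since $x_0$ is fixed by $\Ff$, the chain rule at $x_0$ reduces to $\Gamma_{t+s}(x_0)=\Gamma_s(x_0)\Gamma_t(x_0)$, so $t\mapsto\Gamma_t(x_0)$ is a uniformly continuous one-parameter semigroup in $\A$ whose generator, by~\eqref{B}, is $B(x_0)$. Hence $\Gamma_t(x_0)=e^{tB(x_0)}$, and the spectral characterization~\eqref{kappa+} gives $\limsup_{t\to\infty}\tfrac1t\ln\|\Gamma_t(x_0)\|_\A=\kappa$ exactly. The strategy is to propagate this rate from $x_0$ to arbitrary $x\in\DD$ by iterating the chain rule along the orbit $F_{ks}(x)$, which tends to $x_0$, and using continuity of $\Gamma_s$ at $x_0$ to bound each factor $\Gamma_s(F_{ks}(x))$ essentially by $e^{s\kappa}$ once the orbit has entered a small neighborhood of $x_0$.

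Concretely, fix $\epsilon>0$. By~\eqref{kappa+} pick $s>0$ large with $\|e^{sB(x_0)}\|_\A\le e^{s(\kappa+\epsilon/2)}$. Continuity of $\Gamma_s$ at $x_0$ yields a neighborhood $U$ of $x_0$ on which $\|\Gamma_s(y)\|_\A\le e^{s(\kappa+\epsilon)}$; invoking joint continuity of $(r,y)\mapsto\Gamma_r(y)$ (Theorem~\ref{th-JC}) and compactness of $[0,s]$, I may shrink $U$ further so that $\|\Gamma_r(y)\|_\A\le M$ for all $y\in U$ and $r\in[0,s]$. Writing $t=ns+r$ with $0\le r<s$ and iterating the chain rule gives
\[
\Gamma_t(x)=\Gamma_r(F_{ns}(x))\,\Gamma_s(F_{(n-1)s}(x))\cdots\Gamma_s(F_{s}(x))\,\Gamma_s(x).
\]
For fixed $x\in\DD$, attractivity supplies $N_x$ such that $F_{ks}(x)\in U$ for every $k\ge N_x$; bounding the first $N_x$ factors by a constant $C_x$, each of the remaining $n-N_x$ factors by $e^{s(\kappa+\epsilon)}$, and the trailing $\Gamma_r(F_{ns}(x))$ by $M$, I obtain $\|\Gamma_t(x)\|_\A\le MC_x\,e^{(n-N_x)s(\kappa+\epsilon)}$. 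Dividing $\ln\|\Gamma_t(x)\|_\A$ by $t$, sending $t\to\infty$, and then $\epsilon\to 0$ yields the pointwise limsup bound.

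For the uniform assertion on $\DD^*$, uniform convergence of $F_t(x)\to x_0$ on $\DD^*$ lets me pick a \emph{single} threshold $N_0$ so that $F_{ks}(x)\in U$ for every $x\in\DD^*$ and every $k\ge N_0$. The main technical obstacle is then to bound the pre-entry prefactor $\prod_{k=0}^{N_0-1}\|\Gamma_s(F_{ks}(x))\|_\A$ uniformly in $x\in\DD^*$: in infinite dimensions the set $\bigcup_{0\le k<N_0}F_{ks}(\DD^*)$ need not a priori lie strictly inside $\DD$. However, the tail iterates $F_{ks}(\DD^*)\ (k\ge N_0)$ all sit in $U$ by the uniform entry-time property, and the finitely many earlier images can be controlled via joint continuity of the semigroup on the bounded neighborhood of $\DD^*\cup U$ generated by the orbit, combined with joint continuity of $\Gamma_s$. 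Absorbing $M$, the resulting prefactor bound, and $N_0$ into a single constant $L=L(\DD^*,\epsilon)$ produces $\|\Gamma_t(x)\|_\A\le L\,e^{t(\kappa+\epsilon)}$ uniformly on $\DD^*$, completing the proof.
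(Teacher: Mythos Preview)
Your approach is genuinely different from the paper's. The paper argues analytically through the generating ODE: setting $B_0=B(x_0)$ and $A(t)=\Gamma_t(x)-e^{tB_0}$, the relation $A'(t)=B_0A(t)+[B(F_t(x))-B_0]\Gamma_t(x)$ and variation of constants yield an integral inequality to which Gr\"onwall is applied, giving
\[
\log\|\Gamma_t(x)\|_\A\le\log C+(\kappa+\epsilon)t+C\int_0^t\|B(F_s(x))-B_0\|_\A\,ds;
\]
the integrand tends to $0$ along the orbit since $F_s(x)\to x_0$ and $B$ is continuous, so the integral is $o(t)$ and the pointwise bound follows. Your discrete factorization via the chain rule is more combinatorial and avoids both variation of constants and Gr\"onwall; it is a valid route to the pointwise inequality. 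One technical refinement: Theorem~\ref{th-JC} only guarantees joint continuity of $\Gamma$ on $(0,\infty)\times\DD$ unless $\Ff$ itself is jointly continuous at $t=0$, so your compactness argument on $[0,s]$ needs separate justification at $r=0$. Since the semicocycle is generated, the cleanest fix is via~\eqref{estim-u}: for $n$ large the orbit segment $\{F_{ns+\tau}(x):\tau\in[0,s]\}$ lies in a neighborhood of $x_0$ on which $\|B\|\le K$, whence $\|\Gamma_r(F_{ns}(x))\|_\A\le e^{Ks}$ for all $r\in[0,s]$.

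For the uniform statement you correctly isolate the obstacle---bounding $\prod_{k<N_0}\|\Gamma_s(F_{ks}(x))\|_\A$ uniformly over $\DD^*$---but your proposed resolution does not work: joint continuity of $\Gamma_s$ gives local boundedness near each point, not boundedness on a set like $F_{ks}(\DD^*)$, which in infinite dimensions need not be precompact or lie strictly inside $\DD$. It is worth noting that the paper's own argument has the very same lacuna: its final displayed estimate retains the term $C\int_0^T\|B(F_s(x))-B_0\|_\A\,ds$, which is absorbed into $L$ without explaining why it is bounded uniformly in $x\in\DD^*$. Both arguments become complete under mild extra hypotheses (for instance, $\Ff$ acts strictly inside $\DD$ and $B$ is bounded on sets strictly inside $\DD$), but as written the uniform control of the pre-entry contribution is a genuine gap in your proposal.
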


\begin{proof}
    Fixing any $\epsilon>0$, by \eqref{kappa+}, we can find $C$
    such that
    $$\|\Gamma_t(x_0)\|=\|e^{tB_0}\|\leq C e^{(\kappa+\epsilon)t}.$$
    Fixing $x \in \DD$ and defining $A(t)=\Gamma_t(x)- \Gamma_t(x_0)$, we have
    \[
    \frac{d}{dt}A(t)=B(F_t(x))\Gamma_t(x)-B(x_0)\Gamma_t(x_0)=[B(F_t(x))-B_0]\Gamma_t(x)+B_0A(t).
    \]
    Solving this linear non-homogenous equation for $A(t)$, we get
    $$A(t) = \int_0^t e^{(t-s)B_0}[B(F_s(x))-B_0]\Gamma_s(x)ds, $$
    hence
    \begin{eqnarray}\label{ki}
    &&\|\Gamma_t(x)\|_\A =\|\Gamma_t(x_0)+A(t)\|_\A  \nonumber\\
    &\leq& \|\Gamma_t(x_0)\|_\A+ \int_0^t \|e^{(t-s)B_0}\|_\A \cdot
    \| B(F_s(x))-B_0\|_\A\cdot \|\Gamma_s(x)\|_\A ds \nonumber\\
    &\leq& C e^{(\kappa+\epsilon)t}+ C\int_0^t  e^{(\kappa+\epsilon)(t-s)}
    \| B(F_s(x))-B_0\| _\A\cdot\|\Gamma_s(x)\|_\A ds. \hspace{10mm}
    \end{eqnarray}
    Using Gr\"onwall's inequality (see {\it} e.g.
    \cite[Ch.2, Corollary 2.2]{Kr}), \eqref{ki} implies
    $$\|\Gamma_t(x)\|_\A\leq Ce^{(\kappa+\epsilon) t + C\int_0^t \| B(F_t(x))-B_0\|ds},$$
    that is,
    \begin{equation}\label{ki1}
    \log \|\Gamma_t(x)\|_\A \leq \log C+ (\kappa+\epsilon) t +
    C\int_0^t \| B(F_s(x))-B_0\|_\A ds.
    \end{equation}
    According to the continuity of $B$ assumed at the beginning of the section, $\lim\limits_{t\rightarrow \infty} \|
    B(F_t(x))-B_0\|_\A=0$, hence
    $$\lim_{t\rightarrow \infty}\frac{1}{t}\int_0^t \| B(F_s(x))-B_0\|_\A ds=0.$$
    Therefore \eqref{ki1} implies
    $$\limsup_{t\rightarrow \infty} \frac{1}{t}\log \|\Gamma_t(x)\|_\A \leq \kappa+\epsilon,  $$
    and since $\epsilon>0$ was arbitrary, the first assertion follows.

    To prove the second assertion, we first find a neighborhood $U$ of
    $x_0$ such that ${\|B(x)-B(x_0)\|_\A<\epsilon}$ for all $x\in U$.
    Further, for every $\mathcal{D}^*$ which is strictly inside $\mathcal{D}$ there is $T>0$ such that $F_t(x)\in U$ for all $x\in\mathcal{D}^*$ and $t>T$. Then \eqref{ki1} can be rewritten
    in the form
    \[
    \log \|\Gamma_t(x_1)\|_\A \leq \log C+ (\kappa+\epsilon) t +
    C\int_0^T \| B(F_s(x_1))-B_0\|_\A ds +C\epsilon(t-T),
    \]
    which implies the result.
\end{proof}

\section{Holomorphic semicocycles}\label{Hol-semico}

\setcounter{equation}{0}

Let $\DD$ be a bounded domain  in a complex Banach space.
In this section we study holomorphic semicocycles $\{\Gamma_t\}_{t\ge 0} \subset \Hol(\DD,\A)$ over $\Ff \subset \Hol(\DD)$.
For such semicocycles, we intend to sharpen the results of the previous section concerning differentiability and growth estimates.

Our aim is to show the one-to-one correspondence between generators and UJC semicocycles: every $B\in \Hol(\DD,\A)$ generates a UJC semicocycle $\{\Gamma_t\}_{t\ge 0} \subset \Hol(\DD,\A)$, and every UJC holomorphic semicocycle is generated by some $B\in \Hol(\DD,\A)$. Moreover, a semicocycle is $T$-continuous if and only if its generator is bounded on sets strictly inside~$\DD$.

First, we mention that semicocycles in Examples~\ref{examp12+}--\ref{examp12} are holomorphic, so  there are holomorphic semicocycles which are not UJC and UJC holomorphic semicocycles which are not $T$-continuous.

Throughout the section, we assume that $\Ff$ is $T$-continuous. Consider now the unique solution to the evolution problem
\begin{equation}\label{cauchy-a}
\left\{
\begin{array}{l}
\displaystyle \frac{d v(t,x)}{d t} =B(F_t(x))v(t,x) \vspace{2mm} \\
v(0,x)=1_\A,
\end{array}%
\right.
\end{equation}
where $B\in \Hol(\mathcal{D},\A)$ and  $\Ff \subset \Hol(\DD)$. By Theorem~\ref{th-sol-cauchy}, the family $\{\Gamma_t:=v(t,\cdot)\}_{t\ge0}$ is a semicocycle over $\Ff$. Using assertion (i) of Theorem~\ref{th-sol-cauchy_1} and the  implicit function theorem in Banach spaces, one can easily see that this semicocycle is holomorphic.

\begin{theorem}\label{th-differentiability-a}
	Let $\Ff=\left\{F_t\right\}_{t\ge0}\subset\Hol(\DD)$ be a $T$-continuous semigroup on a bounded domain $\DD$. Then a semicocycle $\{\Gamma_t\}_{t \geq 0}\subset \Hol(\DD, \A)$ over $\Ff$ is generated by a mapping $B\in\Hol(\DD,\A)$ if and only if it is uniformly jointly continuous.
\end{theorem}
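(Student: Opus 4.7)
For the ``only if'' direction, assume $\{\Gamma_t\}_{t\ge 0}$ is generated by $B\in\Hol(\DD,\A)$. By Proposition~\ref{lem-LUC}(i) it suffices to verify UJC at $t=0$. Fix $x_0\in\DD$ and, since $B$ is continuous, choose a neighborhood $W$ of $x_0$ with $\|B(\cdot)\|_\A\le M$ on $W$. By Proposition~\ref{th-cont}(i), the $T$-continuity of $\Ff$ implies its UJC at $(0,x_0)$, so there exist a neighborhood $U\subset W$ of $x_0$ and $\delta>0$ with $F_s(U)\subset W$ for all $s\in[0,\delta]$. The integral form of \eqref{cauchy-a} (cf.\ Theorem~\ref{th-sol-cauchy_1}(i)) reads $\Gamma_t(x)=1_\A+\int_0^t B(F_s(x))\Gamma_s(x)\,ds$; Gr\"onwall's inequality then yields $\|\Gamma_t(x)\|_\A\le e^{Mt}$ for $x\in U$, $t\le\delta$, whence
\[
\|\Gamma_t(x)-1_\A\|_\A \le M\int_0^t e^{Ms}\,ds = e^{Mt}-1
\]
tends to $0$ uniformly on $U$ as $t\to 0^+$.

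For the converse, assume $\{\Gamma_t\}_{t\ge 0}$ is UJC. The plan is to apply Theorem~\ref{th-differentiability}. Since $\Ff$ is $T$-continuous, Theorem~\ref{th-RS} yields its holomorphic generator $f\in\Hol(\DD,X)$. By Lemma~\ref{lem-T-hol}, UJC of $\{\Gamma_t\}$ transfers to $\{{\Gamma_t}'\}\subset\Hol(\DD,L(X,\A))$, which is therefore JC on $[0,\infty)\times\DD$ by Proposition~\ref{th-cont}(ii). Because the evaluation map $L(X,\A)\times X\to\A$ is bilinear and continuous and $f$ is continuous, the composition $(t,x)\mapsto {\Gamma_t}'(x)[f(x)]$ is jointly continuous on $[0,\infty)\times\DD$. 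Theorem~\ref{th-differentiability} then provides a mapping $B\in C(\DD,\A)$ defined by $B(x)=\frac{d}{dt}\Gamma_t(x)\bigr|_{t=0}$, with respect to which $\Gamma_t$ is the unique solution of \eqref{cauchy-a}.

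The main obstacle is to upgrade $B$ from $C(\DD,\A)$ to $\Hol(\DD,\A)$, since Theorem~\ref{th-differentiability} only provides the former. For this I would invoke formula \eqref{B1} from its proof:
\[
B(x)=V(s,x)^{-1}\Bigl(\Gamma_s(x)-1_\A-\int_0^s {\Gamma_\tau}'(x)[f(x)]\,d\tau\Bigr),
\]
valid on a neighborhood of any $x_0$ provided $s>0$ is chosen so that $V(s,x_0)$ is invertible (by Lemma~\ref{lem-V}(iii), together with continuity of $V$). The factors $\Gamma_s$, $V(s,\cdot)$ and the integrand ${\Gamma_\tau}'(\cdot)[f(\cdot)]$ are all holomorphic in $x$; the $\tau$-integral preserves $x$-holomorphy because the integrand is jointly continuous in $(\tau,x)$ and holomorphic in $x$, so one may differentiate under the integral sign; and inversion in the Banach algebra $\A$ is holomorphic on the open set of invertible elements. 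Consequently $B\in\Hol(\DD,\A)$, which completes the equivalence.
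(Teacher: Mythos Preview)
Your proof is correct and follows essentially the same route as the paper's: both directions invoke the same key ingredients (Proposition~\ref{lem-LUC}(i) to reduce to $t=0$, the integral equation plus the bound from Theorem~\ref{th-sol-cauchy_1} for the forward direction, and Lemma~\ref{lem-T-hol} together with Theorem~\ref{th-differentiability} and formula~\eqref{B1} for the converse). Your version is in fact slightly more careful than the paper's in spelling out why $\{{\Gamma_t}'[f]\}$ is jointly continuous (the hypothesis actually required by Theorem~\ref{th-differentiability}) and why the right-hand side of~\eqref{B1} is holomorphic in~$x$, points the paper leaves implicit.
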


\begin{proof}
We first show that a semicocycle generated by (\ref{cauchy-a}) is UJC. By Lemma \ref{lem-LUC}, it suffices to show it is UJC
at $t=0$. Fix $x_0\in \DD$.
The holomorphic mapping $B$ is bounded in a neighborhood $U\subset\DD$ of $x_0$, say $\|B(x)\|_\A\le M$ for $x\in U$.
Let $U^*$ be a neighborhood of $x_0$ which lies strictly inside $U$.
 By $T$-continuity of $\Ff$, for every $\epsilon>0$ there is $\delta>0$ such that $\|F_t(x)-x\|_X<\epsilon$ for all $x\in U^*$ whenever $t\in[0,\delta)$ . Hence there is  $\delta>0$ such that $F_t(x)\in U$ for $(t,x)\in [0,\delta)\times U^*$. Therefore $\|B(F_{s}(x))\|_\A\le M$ and by assertion (ii) of Theorem~\ref{th-sol-cauchy_1}, $\|\Gamma_s(x)\|_\A \le e^{sM}$ for all $s\le t<\delta$ and $x\in U^*$. Thus by the semicocycle property and by assertion (i) of Theorem~\ref{th-sol-cauchy_1},
$$\left\|\Gamma_{t}(x) -1\right\|=\left\|\int_0^t \frac{d}{ds}\Gamma_{s}(x)ds \right\|=\left\|\int_0^t B(F_{s}(x))  \Gamma_{s}(x)  ds\right\|  \le \left(e^{tM}-1\right)M$$
for $(t,x)\in [0,\delta)\times U^*$, implies the uniform joint continuity.

Conversely, let $\{\Gamma_t\}_{t\ge0}$ be a UJC holomorphic semicocycle. Then by Lemma~\ref{lem-T-hol}, the family $\{{\Gamma_t}'\}_{t\ge0}$ is also UJC, hence JC by Proposition~\ref{th-cont}. Then applying Theorem~\ref{th-differentiability}, we conclude that the semicocycle is differentiable. Finally, by formula~\eqref{B1}, $B\in \Hol(\DD,\A)$.
\end{proof}

\begin{remark}
  In the case where $X$ is finite-dimensional, every semigroup $\Ff\in\Hol(\DD)$ is JC on $[0,\infty)\times \A$ by Theorem~\ref{th-Ch-M-B}, and is thus $T$-continuous by Proposition~\ref{th-cont}(iii), and every holomorphic semicocycle $\{\Gamma_t\}_{t\ge0}$ over $\Ff$ is JC  by Theorem~\ref{th-JC}, hence it is UJC. Thus, applying as above  Lemma~\ref{lem-T-hol} and  Theorem~\ref{th-differentiability} we see that

  Any holomorphic semicocycle on a bounded domain in a finite-dimensional space is a solution of an evolution problem~\eqref{cauchy-a}.
\end{remark}
Another surprising fact can be seen analysing the arguments leading to Theorem~\ref{th-differentiability-a}.
\begin{corol}
  Let $\{\Gamma_t\}_{t\ge0}$ be a semicocycle over a $T$-continuous semigroup. If the family $\{{\Gamma_t}'\}_{t\ge0}$ is jointly continuous, it is uniformly jointly continuous.
\end{corol}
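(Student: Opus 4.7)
The plan is to exploit the Cauchy-type structure provided by Theorems~\ref{th-differentiability} and~\ref{th-differentiability-a}. In one line: the hypothesis lets one produce a holomorphic generator $B$ for $\{\Gamma_t\}_{t \ge 0}$, which in turn forces the semicocycle (hence its derivatives) to be uniformly jointly continuous.

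First, since $\DD$ is bounded and $\Ff \subset \Hol(\DD)$ is $T$-continuous, Theorem~\ref{th-RS} supplies a holomorphic generator $f \in \Hol(\DD, X)$. The assumed joint continuity of $\{{\Gamma_t}'\}_{t \ge 0}$ on $[0,\infty) \times \DD$, combined with the continuity of $f$, shows that the family $\{{\Gamma_t}'[f]\}_{t \ge 0}$ is jointly continuous on $[0,\infty) \times \DD$. This is precisely the hypothesis of Theorem~\ref{th-differentiability}, so that result delivers differentiability of the semicocycle with respect to $t$, together with a continuous generator $B \in C(\DD, \A)$ represented by formula~\eqref{B1}.

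The next step, which requires the most care, is to upgrade $B$ from continuous to holomorphic. Fix $x_0 \in \DD$. Holomorphy of the semicocycle makes $V(s, \cdot) = \int_0^s \Gamma_\tau(\cdot)\, d\tau$ holomorphic in $x$; similarly the integrand ${\Gamma_\tau}'(x)[f(x)]$ is holomorphic in $x$ with continuous dependence on $\tau$, so its $\tau$-integral remains holomorphic in $x$. By Lemma~\ref{lem-V}(iii), $V(s, x_0)$ is invertible for sufficiently small $s > 0$, and inversion preserves holomorphy on the neighborhood where it is defined. Formula~\eqref{B1} then represents $B$ as a holomorphic mapping in a neighborhood of $x_0$, and since $x_0$ was arbitrary, $B \in \Hol(\DD, \A)$.

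Finally, the forward implication of Theorem~\ref{th-differentiability-a} shows that a holomorphic semicocycle generated by some $B \in \Hol(\DD, \A)$ is uniformly jointly continuous, so $\{\Gamma_t\}_{t \ge 0}$ is UJC. Applying Lemma~\ref{lem-T-hol} to this UJC family of holomorphic mappings transfers uniform joint continuity to the derivatives $\{{\Gamma_t}'\}_{t \ge 0}$, which is what was claimed. The main obstacle is the middle step—justifying holomorphy of $B$ directly from the representation~\eqref{B1}; everything else amounts to assembling previously established results.
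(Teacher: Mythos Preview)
Your proof is correct and follows exactly the route the paper intends: the corollary is stated as a consequence of ``analysing the arguments leading to Theorem~\ref{th-differentiability-a}'', and what you have written is precisely that analysis spelled out---use joint continuity of $\{\Gamma_t'\}$ to feed Theorem~\ref{th-differentiability}, read off holomorphy of $B$ from~\eqref{B1}, invoke the forward direction of Theorem~\ref{th-differentiability-a} to obtain UJC of $\{\Gamma_t\}$, and finish with Lemma~\ref{lem-T-hol}. Your explicit justification of the holomorphy of $B$ from~\eqref{B1} is more detailed than the paper's one-line assertion, but the approach is identical.
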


The previous theorem provides conditions for a holomorphic semicocycle to be a solution of evolution problem \eqref{cauchy-a}. Recall that the differentiability of a holomorphic semigroup and the boundedness of its generator is tightly connected to $T$-continuity of a semigroup (see Theorem~\ref{th-RS}). It follows from Theorem~\ref{th-differentiability-a} that for differentiability of a semicocycle its $T$-continuity is not needed. At the same time, the next result shows a partial similarity with the semigroup case.

\begin{theorem}\label{th-eq-Pr-Cauchy}
Assume that $\left\{\Gamma_t\right\}_{t\ge0}\subset\Hol(\DD,\A)$ is a semicocycle over a $T$-continuous semigroup $\mathcal{F}=\left\{F_t\right\}_{t\ge0}\subset\Hol(\DD)$. Then $\left\{\Gamma_t\right\}_{t\ge0}$ is $T$-continuous if and only
if it is generated by a mapping ${B\in\Hol(\mathcal{D},\A)}$, which is bounded on every domain $\DD^*$ strictly inside $\mathcal{D}$.
\end{theorem}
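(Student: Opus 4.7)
The biconditional splits naturally into two directions, both of which can be handled using machinery already developed. For the forward direction, $T$-continuity implies UJC by Proposition~\ref{th-cont}(i), so by Theorem~\ref{th-differentiability-a} the semicocycle is generated by some $B\in\Hol(\DD,\A)$; the task is then to prove that $B$ is bounded on sets strictly inside $\DD$. For the backward direction, existence of a generator $B$ with the stated boundedness property, combined with the evolution equation~\eqref{cauchy-a} and the bound in Theorem~\ref{th-sol-cauchy_1}(ii), will yield $T$-continuity.

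For the forward direction I plan to use the integral formula~\eqref{B1} obtained in the proof of Theorem~\ref{th-differentiability}:
\[
B(x)=V(s,x)^{-1}\!\left(\Gamma_s(x)-1_\A-\int_0^s\Gamma_\tau'(x)[f(x)]\,d\tau\right),
\]
where $f\in\Hol(\DD,X)$ is the generator of $\Ff$, which exists and is bounded on strict-inside sets by Theorem~\ref{th-RS}. Fix $\DD^*$ strictly inside $\DD$. First, $T$-continuity of $\{\Gamma_t\}$ at $t=0$ gives $V(s,x)/s\to 1_\A$ uniformly on $\DD^*$, so for a sufficiently small fixed $s$, $V(s,x)$ is invertible on $\DD^*$ with $\|V(s,x)^{-1}\|_\A\le 2/s$. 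Second, choosing $\DD^{**}$ strictly inside $\DD$ that contains an open neighborhood of $\DD^*$, $T$-continuity supplies a uniform bound on $\|\Gamma_\tau\|_\A$ over $\DD^{**}\times[0,s]$; a Cauchy estimate then furnishes a uniform bound on $\|\Gamma_\tau'(x)\|_{L(X,\A)}$ for $x\in\DD^*$. Combined with boundedness of $\|f\|_X$ on $\DD^{**}$, this makes the integrand uniformly bounded. Plugging into the formula produces a uniform bound on $\|B(x)\|_\A$ over $\DD^*$.

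For the backward direction, suppose $B\in\Hol(\DD,\A)$ is bounded on strict-inside sets. I first establish $T$-continuity at $t=0$. Fix $\DD^*$ strictly inside $\DD$. Since $\Ff\subset\Hol(\DD)$ acts strictly inside $\DD$ (as discussed at the end of Section~\ref{sect-prelim} via the hyperbolic metric on the bounded domain $\DD$), there exist $\delta>0$ and a set $\DD^{**}$ strictly inside $\DD$ with $F_s(\DD^*)\subseteq\DD^{**}$ for all $s\in[0,\delta]$. Setting $M=\sup_{\DD^{**}}\|B\|_\A$, the evolution equation~\eqref{cauchy-a} together with Theorem~\ref{th-sol-cauchy_1}(ii) gives $\|\Gamma_s(x)\|_\A\le e^{sM}$ for $(s,x)\in[0,\delta]\times\DD^*$, and then the integral form yields
\[
\|\Gamma_t(x)-1_\A\|_\A\le\int_0^t\|B(F_s(x))\|_\A\|\Gamma_s(x)\|_\A\,ds\le e^{tM}-1,
\]
establishing $T$-continuity on $\DD^*$ at $t=0$. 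To extend this to every $t_0\ge 0$, I will apply Proposition~\ref{lem-LUC}(ii); its remaining hypothesis -- boundedness of each $\Gamma_t$ on strict-inside sets -- follows by iterating the chain rule $\Gamma_{t+\delta}(x)=\Gamma_\delta(F_t(x))\Gamma_t(x)$ along a finite subdivision of $[0,t]$ into intervals of length at most $\delta$, using the strict-inside action of $\Ff$ to furnish a suitable nested enlargement at each step.

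The step I expect to be hardest is the Cauchy-estimate argument in the forward direction. It is essential that $T$-continuity is assumed (rather than merely UJC), because only $T$-continuity produces a single uniform bound on $\Gamma_\tau$ over an enlargement $\DD^{**}$ of $\DD^*$, which Cauchy's inequality can then convert into a uniform bound on $\Gamma_\tau'$ over $\DD^*$. UJC alone supplies only pointwise neighborhoods and would be insufficient -- consistent with Example~\ref{examp12}, which exhibits a UJC holomorphic semicocycle that is generated yet not $T$-continuous.
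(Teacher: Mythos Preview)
Your proposal is correct and follows essentially the same route as the paper: both directions hinge on formula~\eqref{B1}, a Cauchy estimate to bound $\Gamma_\tau'$ uniformly on a set strictly inside, and the integral bound coming from Theorem~\ref{th-sol-cauchy_1}. The only minor deviations are that the paper confines $F_s(\DD_1)$ for small $s$ directly via the $T$-continuity of $\Ff$ (rather than invoking the hyperbolic-metric ``acts strictly inside'' fact), and that the paper simply asserts $T$-continuity after establishing it at $t_0=0$, whereas you are more explicit in extending to all $t_0$ via Proposition~\ref{lem-LUC}(ii).
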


\begin{proof}
Assume that $\left\{\Gamma_t\right\}_{t\ge0}$ is $T$-continuous. Then by Lemma~\ref{lem-T-hol}, the family $\left\{{\Gamma_t}'\right\}_{t\ge0}$ is also $T$-continuous; and hence, by Proposition~\ref{th-cont}, it is jointly continuous. Therefore for every $t\ge0$, the mapping $V(t,x)$ defined by \eqref{V-func} is holomorphic and Theorem~\ref{th-differentiability} can be applied (equivalently, one uses Theorem~\ref{th-differentiability-a}). Hence $\Gamma_t(x)$ is the unique solution of \eqref{cauchy-a}.

Choose any two domains $\mathcal{D}_1$ and $\mathcal{D}_2$ such that $\mathcal{D}_1$ is strictly inside $\mathcal{D}_2$ and $\mathcal{D}_2$ is strictly inside $\mathcal{D}$. We can choose $t_1$ so that
    \begin{equation}\label{ee0}
x\in\mathcal{D}_2, \,  t\in [0,t_1]\quad \Rightarrow\quad \left\|
\Gamma_t(x)-1_{\A}\right\|_\A \leq 1 \quad \Rightarrow\quad
\left\|\Gamma_t(x)\right\|_\A\leq 2.
    \end{equation}

Since the semigroup $\mathcal{F}$ is $T$-continuous, it follows from Theorem~\ref{th-RS} that it is generated by some mapping
$f\in\Hol(\mathcal{D},X)$, which is bounded on each domain strictly inside $\mathcal{D}$. Therefore there is $m_0$ so that
\begin{equation}\label{f-est}
\|f(x)\|_X\leq m_0 \mbox{ for all } x\in\mathcal{D}_2.
\end{equation}
By \eqref{ee0} and Cauchy's inequality, we have
$$x\in \mathcal{D}_1,\;t\in [0,t_1]\;\;\Rightarrow\;\;\|\Gamma_{t}'(x)\|_{\A} \leq m_1.$$
This implies that the function $V(t,x)$ defined by \eqref{V-func} satisfies
\begin{equation}\label{ee3}
x\in \mathcal{D}_1,\; t\in [0,t_1]\; \;\Rightarrow\;\;\left\|V'(t,x)\right\|_{L(X,\A)}\leq \int_0^{t}\|\Gamma_{s}'(x)\|_\A ds \leq m_1 t.
\end{equation}

To proceed we show now that there exists $\tau  >0$ such that
\begin{equation}\label{est-invV}
\left\|\left[ V(t,x)\right]^{-1}\right\|_\A \leq \frac{2}{t}
\end{equation}
for all $0<t< \tau$ and $x\in \mathcal{D}_2$. Indeed, since $\left\{\Gamma_t\right\}_{t\ge0}$ is $T$-continuous, the convergence $\frac{1}{t}V(t,x)\to 1_\A$ as $t \to 0$, is uniform on~$\mathcal{D}_2.$ So there exists $\tau  >0$ such that the inequality $\displaystyle \left\| \frac{1}{t}V(t,x)
-1_\A \right\|_\A \leq \frac{1}{2}\,$ holds for all $t<\tau$ and $x\in\mathcal{D}_2$. Thus
\[
\left\|\left[ \frac{1}{t}V(t,x)\right]^{-1} \right\|_\A \leq
\frac{1}{1-\left\| \frac{1}{t}V(t,x) -1_\A\right\|_\A}\leq 2,
\]
which implies \eqref{est-invV}.

Let us now turn to \eqref{B1}. This formula shows that in our setting $B\in\Hol(\DD,\A)$. Further, assuming $x\in\mathcal{D}_1$ and using \eqref{ee0}--\eqref{est-invV}, it implies that, for $s<\min(t_1,\tau)$,
\begin{eqnarray*}
&&\|B(x)\|_\A\leq \left\| [V(s,x)]^{-1}\right\|_\A \cdot
\left\|\left[\Gamma_{s}(x)-1_\A -V'(s,x)[f(x)]\right]\right\|_\A \\
&\leq& \left\| [V(s,x)]^{-1}\right\|_\A \cdot \left[\|[\Gamma_
{s}(x)\|_\A
+ 1+ \left\|V' (s,x)\right\|_{L(X,\A)} \cdot \|f(x)\|_X\right] \\
&\leq& \frac{2}{s} [2+1+m_1 s\cdot m_0].
\end{eqnarray*}

We have therefore shown that $B$ is bounded on $\mathcal{D}_1$.
Since $\mathcal{D}_1$ is an arbitrary domain strictly inside $\mathcal{D}$, we obtain that $B$ is bounded on every domain strictly inside $\mathcal{D}$.

Conversely, assume that $\Gamma_t(x)$ satisfies the evolution
problem \eqref{cauchy-a}, where ${B\in\Hol(\mathcal{D},\A)}$ is bounded on every domain strictly inside $\mathcal{D}$. Then  $\{\Gamma_t\}_{t\ge0}$ is a semicocycle over $\{F_t\}_{t \ge 0}$ by Theorem \ref{th-sol-cauchy}.

Since $\{F_t\}_{t \ge 0}$ is $T$-continuous, then for any two domains $\mathcal{D}_2$  strictly inside $\mathcal{D}$ and $\mathcal{D}_1$ strictly inside $\mathcal{D}_2$ and for any ${0<\varepsilon<{\rm dist}\{\partial\mathcal{D}_1,\partial\mathcal{D}_2 \}}$ there is $t^*>0$ such that
for all $s\in[0,t^*]$ we have ${\left\|F_s(x)-x\right\|_X<\varepsilon}$ whenever $x\in\mathcal{D}_1$.
Since $B$ is bounded on every domain strictly inside $\mathcal{D}$, there exists $M>0$ such that $\|B(x)\|_\A\leq M$ on the set
 $\{x: {\rm dist}(x,\mathcal{D}_1)\leq \epsilon \}$. Thus $\|B(F_s(x)\|_\A \le M$
whenever $x \in \mathcal{D}_1$. In particular, this implies by Theorem~\ref{th-sol-cauchy_1}(ii) that $\|\Gamma_s(x)\|_\A\le
e^{Ms}$ for all $s\in[0,t^*]$ and $x\in\overline{\mathcal{D}_1}$.
In addition, $\Gamma_t(x)$ satisfies the integral
equation~\eqref{integral-form}. Therefore, for $x\in \mathcal{D}_1,0\leq t\leq t^*$,
\begin{eqnarray*}
\|\Gamma_t(x)-1_\A\|_\A &\leq&
\int_0^t\|B(F_s(x))\|_\A \cdot \| \Gamma_s(x)\|_\A ds \\
&\le& \int_0^t M e^{Ms}ds =e^{Mt}-1.
\end{eqnarray*}
Thus, $\left\{\Gamma_t\right\}_{t\ge0}$ is $T$-continuous.
\end{proof}

Returning to Example~\ref{examp12} of a semicocycle which is not
$T$-continuous, we see that its generator, given by $B(x)=
\sum\limits_{k=1}^\infty k\left(\frac{x_k}{\rho}\right)^k$, is
unbounded on the ball of radius $\rho$.

It turns out that for holomorphic semicocycles the existence of bounds of the form~\eqref{K}, that is, $\left\|\Gamma_t\right\|_{\DD^*}\leq e^{Kt}$, on suitable subsets $\DD^* \subseteq \mathcal{D}$ and exponents $K=K(\DD)$ is intimately connected to $T$-continuity of the semicocycle.

\begin{propo}\label{propo-growth}
    Let $\DD \subset X$ be a bounded domain. Let $\left\{\Gamma_t\right\}_{t\ge0}\subset\Hol(\DD,\A)$ be a semicocycle over a $T$-continuous semigroup $\mathcal{F}\subset \Hol(\DD)$. If there exists $K$ such that $\left\|\Gamma_t\right\|_{\DD}\leq e^{Kt}$, then $\{\Gamma_t\}_{t\ge0}$ is $T$-continuous.
\end{propo}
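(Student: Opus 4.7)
The plan is to apply Theorem~\ref{th-eq-Pr-Cauchy}: it suffices to show that $\{\Gamma_t\}_{t \ge 0}$ is generated by some $B \in \Hol(\DD, \A)$ bounded on every subset strictly inside~$\DD$.

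First, I establish UJC. The hypothesis $\|\Gamma_t(x)\|_\A \le e^{Kt}$ on $\DD$ combined with Cauchy's inequality for $\A$-valued holomorphic mappings on any closed ball $\overline{B(x_0, R)} \subset \DD$ yields
\[
\|\Gamma_t'(x)\|_{L(X, \A)} \le \frac{e^{Kt}}{R - \|x - x_0\|}, \qquad x \in B(x_0, R),
\]
so $\{\Gamma_t\}_{t \in [0, 1]}$ is equi-Lipschitz on every strictly smaller ball. Coupled with the pointwise convergence $\Gamma_t(x_0) \to 1_\A$ from Theorem~\ref{th-JC} and the triangle decomposition
\[
\|\Gamma_t(x) - 1_\A\|_\A \le \|\Gamma_t(x) - \Gamma_t(x_0)\|_\A + \|\Gamma_t(x_0) - 1_\A\|_\A,
\]
a standard $\eta/2$-split (choose the ball small enough to control the first term, then $t$ small enough to control the second) gives UJC at $(0, x_0)$. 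Proposition~\ref{lem-LUC}(i) then extends UJC to every $t_0 \ge 0$, and Theorem~\ref{th-differentiability-a} produces a generator $B \in \Hol(\DD, \A)$.

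Second, I show $B$ is bounded on every subset strictly inside $\DD$. Substituting the expansion $\Gamma_t(x) = 1_\A + tB(x) + o(t)$ into the hypothesis and letting $t \to 0^+$ yields the pointwise one-sided bound $\mu(B(x)) \le K$ on $\DD$, with $\mu$ as in Theorem~\ref{th-sol-cauchy_1}(ii). For scalar $\A = \C$ one has $\mu(B) = \Re B$, and an iterated Borel-Carath\'{e}odory argument (propagated along paths using the connectedness and boundedness of $\DD$) upgrades $\Re B \le K$ to a uniform bound on $|B|$ over every strictly inside subset. For a general Banach algebra one applies the scalar estimate slice-wise to the functions $\zeta \mapsto \phi(B(x_0 + \zeta v))$ for normalized $\phi \in \A^*$ with $\phi(1_\A) = 1$, using the inequality $\Re \phi(B) \le \mu(B) \le K$ to recover a uniform bound on $\|B\|$ over strictly inside sets. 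Once $B$ is known to be bounded on such sets, Theorem~\ref{th-eq-Pr-Cauchy} yields the $T$-continuity of $\{\Gamma_t\}_{t \ge 0}$.

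The main obstacle is the second step: a holomorphic $\A$-valued mapping on a bounded infinite-dimensional domain need not be bounded on strictly inside subsets (as the paper notes following Theorem~\ref{th-RS}), so the upgrade from the pointwise one-sided bound $\mu(B) \le K$ to a uniform norm bound on $B$ requires using the full strength of the hypothesis $\|\Gamma_t\|_\DD \le e^{Kt}$ together with a Borel-Carath\'{e}odory-type propagation argument --- not merely its infinitesimal consequence at $t = 0$.
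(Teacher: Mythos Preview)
Your Step~1 does not establish UJC. The equi-Lipschitz bound together with pointwise convergence $\Gamma_t(x_0)\to 1_\A$ at the single point $x_0$ gives only joint continuity at $(0,x_0)$: in your $\eta/2$-split the neighborhood is chosen \emph{after} $\eta$, whereas UJC requires a \emph{fixed} neighborhood $U$ on which $\sup_{x\in U}\|\Gamma_t(x)-1_\A\|_\A\to 0$. Equicontinuity plus pointwise convergence everywhere upgrades to uniform convergence only on totally bounded sets, and balls in an infinite-dimensional $X$ are not totally bounded, so the gap does not close. Without UJC you cannot invoke Theorem~\ref{th-differentiability-a} (nor Theorem~\ref{th-differentiability}, since joint continuity of $\{{\Gamma_t}'[f]\}$ again needs a uniform-in-$x$ estimate via Cauchy), so the generator $B$ is never produced and Step~2 has nothing to act on. Step~2 itself is also incomplete for general domains: Borel--Carath\'eodory on a slice through $x_0$ controls $|\phi(B(x))|$ only in terms of $|\phi(B(x_0))|$, and propagation along chains compounds constants geometrically; on a non-convex bounded domain in infinite dimensions there is no uniform bound on the number of links needed to reach every point of a strictly-inside set, so no uniform bound on $\|B\|_\A$ results.

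The paper's argument is structurally different and avoids both obstacles. It renormalizes $\hat\Gamma_t=e^{-Kt}\Gamma_t$, so that $\|\hat\Gamma_t(x)\|_\A\le 1$ on all of $\DD$, and forms the extended semigroup $\widetilde F_t(x,a)=(F_t(x),\hat\Gamma_t(x)a)$ of holomorphic self-maps of the \emph{bounded} domain $\widetilde\DD=\{(x,a):x\in\DD,\ \|a\|_\A<1\}$. The Reich--Shoikhet theory for holomorphic semigroups on bounded domains (Theorem~\ref{th-RS}) then forces the generator $\widetilde f(x,a)=(f(x),(B(x)-K\cdot 1_\A)a)$ to be holomorphic and bounded on every set strictly inside $\widetilde\DD$, which immediately yields boundedness of $B$ on sets strictly inside $\DD$; Theorem~\ref{th-eq-Pr-Cauchy} finishes. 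The key idea you are missing is to encode the growth hypothesis $\|\Gamma_t\|_\DD\le e^{Kt}$ as invariance of a bounded product domain under a holomorphic semigroup, thereby delegating both the existence and the strict-inside boundedness of $B$ to the established semigroup machinery rather than arguing them directly.
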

The proof of this proposition follows directly from the proof of Theorem~\ref{th-Tcont-B} below.

Regarding the assertion converse to Proposition~\ref{propo-growth}, it is certainly not true that $T$-continuity implies that $\left\|\Gamma_t\right\|_{\DD}\leq e^{Kt}$, as is shown by Example~2.1 in \cite{EJK}.
Nevertheless, under an additional assumption on the semigroup behavior, $T$-continuity of semicocycle is equivalent to the property that $\left\|\Gamma_t\right\|_{\DD^*}\leq e^{Kt}$ on each $\mathcal{D}^*$ strictly inside $\DD$. More precisely,

\begin{theorem}\label{th-Tcont-B}
   Let $\DD \subset X$ be a bounded domain. Let $\left\{\Gamma_t\right\}_{t\ge0}\subset\Hol(\DD,\A)$ be a semicocycle over a $T$-continuous semigroup $\mathcal{F}\subset\Hol(\DD)$.
  Assume that for every set $\DD_0$ strictly inside $\DD$ there is an $\Ff$-invariant domain $\DD^*$ strictly inside $\DD$ such that $\DD_0 \subset \DD^*$. Then $\{\Gamma_t\}_{t\ge0}$ is $T$-continuous if and only if for every subset $\DD^*$ strictly inside $\DD$ there is $K$ such that $\left\|\Gamma_t\right\|_{\DD^*}\leq e^{Kt}$.
\end{theorem}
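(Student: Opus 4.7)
The plan is to prove the two implications separately. The $(\Leftarrow)$ direction will reduce to a Vitali--Porter type argument for Banach-valued holomorphic families, combined with Proposition~\ref{lem-LUC}(ii) to extend $T$-continuity from $t_0=0$ to all $t_0\ge 0$. The $(\Rightarrow)$ direction will be an immediate packaging of Theorem~\ref{th-eq-Pr-Cauchy} with Proposition~\ref{pro-exp} on an $\Ff$-invariant exhaustion. The $\Ff$-invariant exhaustion assumption in the hypothesis will only be used in the $(\Rightarrow)$ direction; this is consistent with Proposition~\ref{propo-growth}, which corresponds to the $(\Leftarrow)$ direction in the degenerate case $\DD^* = \DD$.

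For $(\Leftarrow)$, fix $\DD_0$ strictly inside $\DD$ and choose $\DD_1$ strictly inside $\DD$ with $\DD_0$ strictly inside $\DD_1$. By hypothesis, $\|\Gamma_t\|_{\DD_1}\le e^{K t}$ for some $K=K(\DD_1)$, so $\{\Gamma_t\}_{t\in[0,1]}\subset\Hol(\DD_1,\A)$ is uniformly bounded. Because $\Gamma_t(x)\to 1_\A$ pointwise as $t\to 0^+$, the Vitali--Porter theorem for holomorphic mappings on bounded domains in a Banach space (whose infinite-dimensional form rests on the Cauchy inequalities used in Lemma~\ref{lem-T-hol}) gives uniform convergence on $\DD_0$. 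Hence $\{\Gamma_t\}$ is $T$-continuous at $t=0$. To pass to general $t_0$, I invoke Proposition~\ref{lem-LUC}(ii): each $\Gamma_t$ is bounded on sets strictly inside by the hypothesis, and $\Ff\subset\Hol(\DD)$ on a bounded domain automatically acts strictly inside (the hyperbolic metric remark in Section~\ref{sect-prelim}), so both assumptions of that proposition are met. This same computation, run with $\DD^*=\DD$, yields Proposition~\ref{propo-growth}.

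For $(\Rightarrow)$, assume $\{\Gamma_t\}$ is $T$-continuous. Theorem~\ref{th-eq-Pr-Cauchy} then supplies a generator $B\in\Hol(\DD,\A)$ that is bounded on every set strictly inside $\DD$. Given $\DD_0$ strictly inside $\DD$, the standing hypothesis provides an $\Ff$-invariant domain $\DD^*$ strictly inside $\DD$ with $\DD_0\subset\DD^*$. Setting $M := \sup_{x\in\DD^*}\|B(x)\|_\A < \infty$, we have $\mu(B(x))\le\|B(x)\|_\A\le M$ on the $\Ff$-invariant set $\DD^*$, and Proposition~\ref{pro-exp} immediately gives $\|\Gamma_t\|_{\DD^*}\le e^{M t}$, whence $\|\Gamma_t\|_{\DD_0}\le e^{M t}$.

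The main obstacle is the Vitali--Porter step in the $(\Leftarrow)$ direction: one must rely on the Banach-space-valued version, asserting that a locally uniformly bounded, pointwise-convergent family in $\Hol(\DD,\A)$ converges uniformly on sets strictly inside $\DD$. This is standard in infinite-dimensional holomorphy, following from equicontinuity produced by the Cauchy estimates, but it is the only nonroutine ingredient. A minor bookkeeping point is verifying the two hypotheses of Proposition~\ref{lem-LUC}(ii) in order to pass from $t_0=0$ to general $t_0$; both are automatic in the holomorphic setting on a bounded domain.
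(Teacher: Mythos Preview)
Your $(\Rightarrow)$ direction matches the paper's: Theorem~\ref{th-eq-Pr-Cauchy} gives a generator $B$ bounded on sets strictly inside, and the $\Ff$-invariant $\DD^*$ lets you apply Proposition~\ref{pro-exp}. That part is fine.

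The $(\Leftarrow)$ direction has a genuine gap. The ``Vitali--Porter theorem'' you invoke is \emph{false} in infinite dimensions: a uniformly bounded, pointwise-convergent family in $\Hol(\DD,\A)$ need not converge uniformly on sets strictly inside $\DD$. A clean counterexample is $X=c_0$, $\DD$ the open unit ball, and $g_n(x)=x_n$: these are linear (hence holomorphic), bounded by $1$ on $\DD$, converge to $0$ pointwise, yet $\sup_{\|x\|\le r}|g_n(x)|=r$ for every $n$. The Cauchy inequalities give equicontinuity, but equicontinuity plus pointwise convergence yields uniform convergence only on \emph{precompact} sets, and balls in an infinite-dimensional space are not precompact. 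So step~5 of your argument fails, and with it your derivation of Proposition~\ref{propo-growth}. Relatedly, your claim that the $\Ff$-invariant exhaustion is needed only for $(\Rightarrow)$ is unjustified.

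The paper's route for $(\Leftarrow)$ is entirely different and \emph{does} use the $\Ff$-invariance hypothesis. Given $\DD_0$, take an $\Ff$-invariant $\DD^*\supset\DD_0$ strictly inside $\DD$, rescale $\hat\Gamma_t:=e^{-Kt}\Gamma_t$ so that $\|\hat\Gamma_t\|_{\DD^*}\le 1$, and form the extended semigroup $\widetilde F_t(x,a)=(F_t(x),\hat\Gamma_t(x)a)$ on the bounded domain $\widetilde\DD=\DD^*\times\{\|a\|_\A<1\}$. The $\Ff$-invariance of $\DD^*$ is exactly what makes $\widetilde\DD$ invariant under $\widetilde\Ff$. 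Now Theorem~\ref{th-RS} applies to $\widetilde\Ff$ and forces its generator $\widetilde f(x,a)=(f(x),(B(x)-K\cdot 1_\A)a)$ to be bounded on sets strictly inside $\widetilde\DD$; hence $B$ is bounded on $\DD_0$, and Theorem~\ref{th-eq-Pr-Cauchy} closes the argument. For Proposition~\ref{propo-growth} one runs the same construction with $\DD^*=\DD$, which is automatically $\Ff$-invariant.
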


The assumption on the semigroup behavior in this theorem holds whenever a semigroup $\Ff$ has a globally attractive fixed point (see for example, \cite{G-R, R-S1}).

\begin{proof}
Suppose that $\{\Gamma_t\}_{t\ge0}$ is $T$-continuous. By Theorem~\ref{th-eq-Pr-Cauchy} it is generated by a holomorphic mapping $B$. Let $\DD_0$ be any set strictly inside $\DD$. By our assumption there is an $\Ff$-invariant domain $\DD^*$ strictly inside $\DD$. By Theorem~\ref{th-eq-Pr-Cauchy}, the supremum $K:=\sup\limits_{x\in\DD^*}\|B(F_s(x))\|_\A$ is finite. Therefore $$\sup\limits_{x\in\DD_0} \mu(B(F_s(x))) \le K.$$ So, $\left\|\Gamma_t\right\|_{\DD^*}\leq e^{Kt}$  by assertion (ii) of Theorem~\ref{th-sol-cauchy_1}.

Conversely, fix an arbitrary set $\DD_0$ strictly inside $\DD$. Suppose that there is an $\Ff$-invariant domain $\DD^*$ strictly inside $\DD$ such that $\DD_0 \subset \DD^*$ and $K=K(\DD^*)$ such that $\|\Gamma_t(x)\|_\A \leq e^{Kt}$ for all $x\in\DD^*$.

Denote $\hat\Gamma_t(x):=e^{-Kt}\Gamma_t(x)$, $t \geq 0$. Then $\|\hat\Gamma_t(x)\|_{\A} \le1$ for all $x\in\DD^*$. It is easy to see that the family $\left\{\hat\Gamma_t \right\}_{t\ge0}$ is a semicocycle.

Then, by Proposition~\ref{th-sg}, the family $\mathcal{\widetilde F}$ defined
by ${\widetilde{F}_t(x,a):= \left(F_t(x),\hat\Gamma_t (x) a
\right)}$ forms a semigroup on $\DD\times\A$. Now we
verify that the bounded domain $\widetilde{\DD}:=\left\{(x,a):\ x\in\DD^*,\
\|a\|_\A<1 \right\}$ is $\widetilde{\mathcal{F}}$-invariant.
Indeed, for every point $(x,a)\in\widetilde{\DD}$ we have
$F_t(x)\in\DD^*$ and
\begin{eqnarray*}
&&\left\|\hat\Gamma_t(x)a \right\|_\A \le \left\| \hat\Gamma_t(x)
\right\|_\A\cdot \|a\|_\A <1,
\end{eqnarray*}
that is, $\widetilde{F}_t(x,a)\in\widetilde{\DD}$. So, $\widetilde{\Ff}$ is a
semigroup on $\widetilde{\DD}$. It is generated by the mapping
\[
\widetilde f(x,a)=\left(f(x),\left(B(x)-K\cdot 1_\A\right)a
\right),
\]
where $f$ and $B$ are the generators of $\Ff$ and
$\{\Gamma_t\}_{t\ge0}$, respectively (cf.
Corollary~\ref{cor-gen}). Since $\widetilde f$ generates a semigroup of holomorphic self-mappings on the
bounded domain $\widetilde{\DD}$, it is holomorphic and is bounded strictly inside $\widetilde{\DD}$; see Theorem~\ref{th-RS}. Hence $B\in\Hol(\DD,\A)$ and is bounded on $D_0$ (which is strictly inside $\DD^*$).
Thus, it follows from Theorem~\ref{th-eq-Pr-Cauchy} that
$\{\Gamma_t\}_{t\ge0}$ is $T$-continuous.
\end{proof}



\end{document}